\newsavebox\saved@arstrutbox
\newcommand*{\setarstrut}[1]{%
  \noalign{%
    \begingroup
      \global\setbox\saved@arstrutbox\copy\@arstrutbox
      #1%
      \global\setbox\@arstrutbox\hbox{%
        \vrule \@height\arraystretch\ht\strutbox
               \@depth\arraystretch \dp\strutbox
               \@width\z@
      }%
    \endgroup
  }%
}
\newcommand*{\restorearstrut}{%
  \noalign{%
    \global\setbox\@arstrutbox\copy\saved@arstrutbox
  }%
}
\renewcommand\xleftrightarrow[2][]{%
  \ext@arrow 9999{\longleftrightarrowfill@}{#1}{#2}}
\newcommand\longleftrightarrowfill@{%
  \arrowfill@\leftarrow\relbar\rightarrow}
\newtheorem{theorem}{Theorem}[section]
\newtheorem{lemma}[theorem]{Lemma}
\newtheorem{question}[theorem]{Question}
\newtheorem{coro}[theorem]{Corollary}
\newtheorem{fact}[theorem]{Fact}
\newtheorem*{nothm}[theorem]{Unproved assertion}
\newtheorem{proposition}[theorem]{Proposition}
\newenvironment{proof}[1][\proofname]{  
\noindent  {\it #1.~}%
}{
    \hfill\sqw\vspace*{.5em}
}
\newcommand{\proofname}{Proof}
\newcommand{\nlr}[1][]{\overset{#1\:}{\longleftrightarrow} \kern -17pt
  \times \kern +7 pt}
\renewcommand{\P}{\mathbb{P}}
\newcommand\blfootnote[1]{%
  \let\thefootnote\relax\footnotetext{#1}
}
\def\sqw{\hbox{\rlap{\leavevmode\raise.3ex\hbox{$\sqcap$}}$%
\sqcup$}}
\newcommand{\N}{\ensuremath{{{\mathbb N}}}}
\newcommand{\E}{\ensuremath{{{\mathbb E}}}}
\newcommand{\Z}{\ensuremath{\mathbb Z}}
\newcommand{\R}{\ensuremath{\mathbb R}}
\newcommand{\Red}{\ensuremath{\mathfrak R}}
\newcommand{\Per}{\ensuremath{\mathfrak P}}
\newcommand{\hei}{\ensuremath{\mathfrak h}}
\newcommand{\dev}{\ensuremath{\mathfrak d}}
\newcommand{\Tfr}{\ensuremath{\mathfrak T}}
\newcommand{\Stu}{\ensuremath{\mathfrak S}}
\newcommand{\Ndeux}{\ensuremath{\mathbb N}^2}
\newcommand{\Aux}{\ensuremath{\mathfrak A}}
\newcommand{\defini}{\textbf}
\newcommand{\act}{\textbf{act}}
\newcommand{\Var}{\textbf{Var}}
\newcommand{\h}{\textbf{h}}
\newcommand{\f}{\textbf{f}}
\newenvironment{rem}[1][Remark.]{\begin{trivlist}
\item[\hskip \labelsep {\itshape #1}]}{\end{trivlist}}
\newenvironment{definition}[1][Definition.]{\begin{trivlist}
\item[\hskip \labelsep {\textsc{#1}}]}{\end{trivlist}}
\newenvironment{definitions}[1][Definitions.]{\begin{trivlist}
\item[\hskip \labelsep {\textsc{#1}}]}{\end{trivlist}}
\newenvironment{asclust}[1][Assumption on the cluster.]{\begin{trivlist}
\item[\hskip \labelsep {\textsc{#1}}]}{\end{trivlist}}
\newenvironment{procedure}[1][Procedure.]{\begin{trivlist}
\item[\hskip \labelsep {\textsc{#1}}]}{\end{trivlist}}
\newenvironment{remark}[1][Remark.]{\begin{trivlist}
\item[\hskip \labelsep {\itshape #1}]}{\end{trivlist}}
\newenvironment{remarks}[1][Remarks.]{\begin{trivlist}
\item[\hskip \labelsep {\itshape #1}]}{\end{trivlist}}
\newenvironment{notation}[1][Notation.]{\begin{trivlist}
\item[\hskip \labelsep {\textsc{#1}}]}{\end{trivlist}}
\newenvironment{notations}[1][Notation.]{\begin{trivlist}
\item[\hskip \labelsep {\textsc{#1}}]}{\end{trivlist}}
\newenvironment{prooffr}{  
\noindent  {\it Démonstration.}%
}{
    \hfill\sqw\vspace*{.5em}
}
\newenvironment{proog}{  
   \noindent  {\textit{Proof of lemma~\ref{gron}.}}%
}{
    \hfill\sqw\vspace*{.5em}
}
\newcommand{\bde}{\begin{prooffr}\ }
\newcommand{\ede}{\end{prooffr}}
\date{\today}
\author{S\'ebastien \sc{Martineau}}
\title{Directed Diffusion-Limited Aggregation}
\begin{document}

\maketitle

\begin{abstract}
In this paper, we define a directed version of the Diffusion-Limited-Aggregation model. We present several equivalent definitions in finite volume and a definition in infinite volume. We obtain bounds on the speed of propagation of information in infinite volume and explore the geometry of the infinite cluster. We also explain how these results fit in a strategy for proving a shape theorem for this model.
\end{abstract}

\section*{Introduction}
\label{intro} 

Diffusion-Limited Aggregation (in short, DLA) is a statistical mechanics growth model that has been introduced in 1981 by Sander and Witten \cite{san}. It is defined as follows. A first particle --- a site of $\Z^2$ --- is fixed. Then, a particle is released ``at infinity'' and performs a symmetric random walk. As soon as it touches the first particle, it stops and sticks to it. Then, we release another particle, which will also stick to the cluster (the set of the particles of the aggregate), and so on\dots\ After a large number of iterations, one obtains a fractal-looking cluster.

\begin{figure}[h!]
\begin{center}
\includegraphics[width = 6.2 cm]{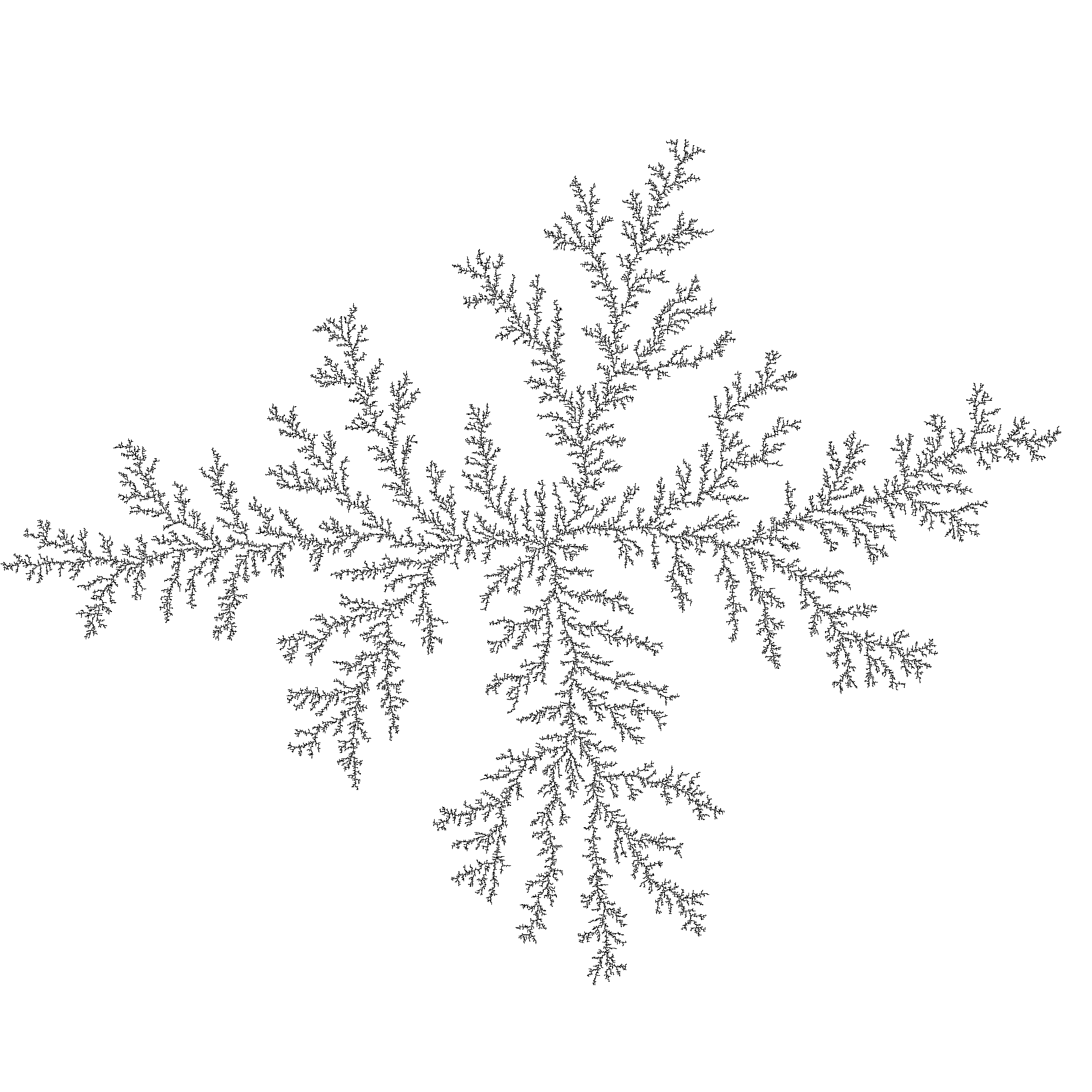}
\caption{DLA cluster obtained by Vincent {Beffara}.}
\label{dla}
\end{center}
\end{figure}

DLA does not just model sticking particles, but also Hele-Shaw flow \cite{shr}, dendritic growth \cite{vic} and dielectric breakdown \cite{bradyball}. Figure~\ref{todd} illustrates the viscous fingering phenomenon, which appears in Hele-Shaw flow. This phenomenon can be observed by injecting quickly a large quantity of oil into water.

\begin{figure}[h!]

\begin{center}
\vspace{0.5 cm}
\includegraphics[width = 8.5 cm]{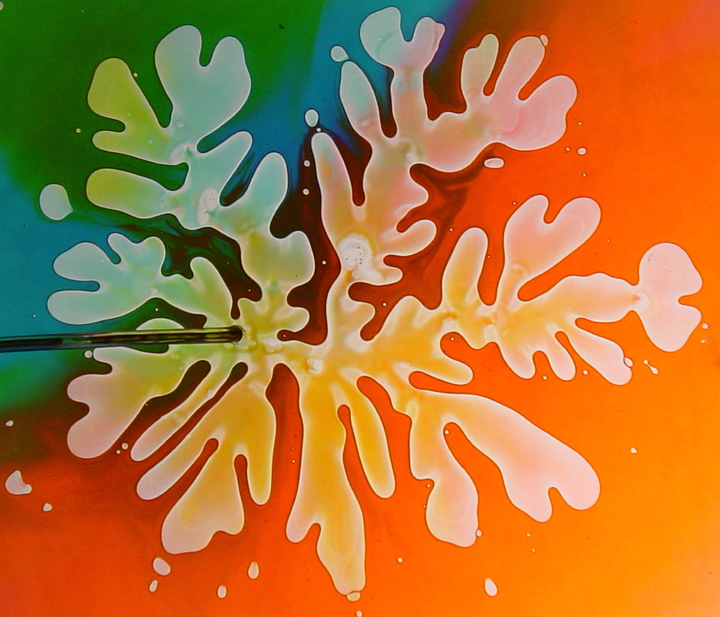}
\caption{Viscous fingering picture obtained by Jessica {Todd}.}
\label{todd}
\end{center}

\end{figure}

This model is extremely hard to study; only two non-trivial results are rigorously known about DLA: an upper bound on the speed of growth \cite{kes} and the fact that the infinite cluster has almost surely infinitely many holes, i.e.\ that the complement of the cluster has infinitely many finite components \cite{ebe}. The difficulty comes from the fact that the dynamics is neither monotone nor local, and that it roughens the cluster.

The \emph{non-locality} is quite clear: if big arms surround $P$, even if they are far from it, $P$ will never be added to the cluster.

By \emph{non-monotonicity} (which is a more serious issue), we mean that there is no coupling between a DLA starting from a cluster $C$ and another from a cluster $D\subsetneq C$ such that, at each step, the inclusion of the clusters remains valid almost surely. To understand why, throw the same particles for both dynamics, i.e.\ use the na\"ive coupling. The big cluster will catch the particles sooner than the small one: when a particle is stopped in the $C$-dynamics\footnote{and if, at the considered time, the $C$-cluster is still bigger than the $D$-one\dots}, it may go on moving for the $D$-dynamics and stick somewhere that is not in the $C$-cluster, which would break the monotonicity.
In fact, this is even a proof of the non-existence of \emph{any} monotonic coupling, under the assumption that there exists 
$(P,Q)\in D\times (C\backslash D)$ such that if $R\in\{P,Q\}$, $R$ can be connected to infinity by a $\Z^2$-path avoiding $C\backslash\{R\}$. 

Finally, the fact that the dynamics \emph{roughens} the cluster instead of smoothing it is what makes the difference between the usual (external) DLA and the internal DLA of \cite{dia}, for which a shape theorem exists \cite{law}. Even though this roughening is not mathematically established, simulations such as the one of Figure~\ref{dla} suggest it by the fractal nature of the picture they provide.

The rigorous study of DLA seeming, for the moment, out of reach, several toy models have been studied. These models are usually easier to study for one of the following reasons:
\begin{itemize}
\item either the particles are not added according to the harmonic measure of the cluster (i.e.\ launched at infinity) but ``according to some nicer measure''\footnote{See e.g.~\cite{carlesonmakarov}.};

\item or the dynamics does not occur in the plane\footnote{See e.g.~\cite{benjaminiyadin} for a study of DLA on cylinders $G\times \mathbb{N}$ or \cite{dla1d1, dla1d2, dla1d3} for results on long-range DLA on $\Z$.}.
\end{itemize}

In this paper, we prove some results on Directed Diffusion-Limited Aggregation (DDLA), which is a variant where the particles follow downward directed random walks. A large cluster is presented in Figure~\ref{ddla}. Directed versions of DLA have already been considered by physicists\footnote{See \cite{bradleystrenski84, bradleystrenski85, maj}.} but, to our knowledge, they have been rigorously studied only in the case of the binary tree (or Bethe lattice). The present model is defined in the plane.
\begin{figure}[h!]
\begin{center}
\includegraphics[angle = 135, width = 6.2 cm]{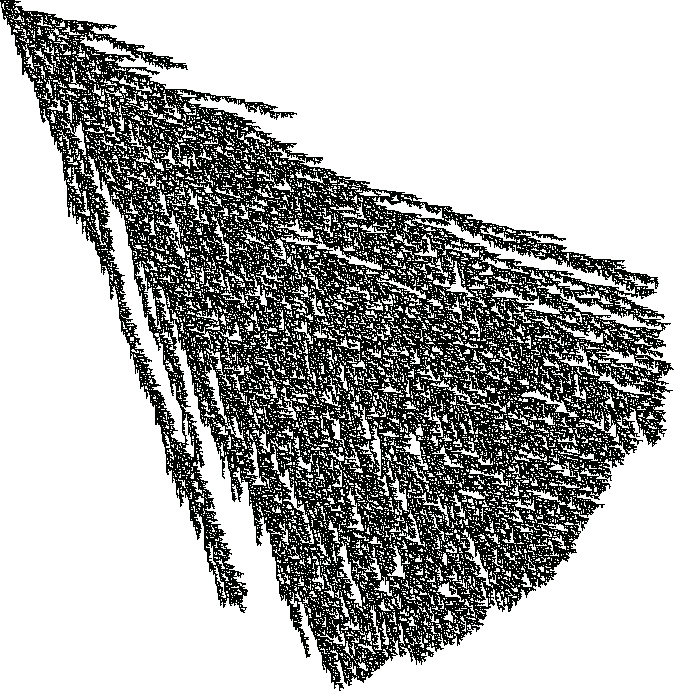}
\caption{Large DDLA cluster obtained by Vincent {Beffara}.}
\label{ddla}
\end{center}
\end{figure}
Simulations strongly suggest that the DDLA-cluster converges after suitable rescaling to some deterministic convex compact, delimited from below by two segments.

DDLA can be seen either as a ballistic deposition model where the falling particles fluctuate randomly or as a stretch version of DLA. See respectively \cite{sep} and \cite{bergerkaganprocaccia}. See also \cite{vik} for a study of the Hastings-Levitov version of DDLA; \cite{vik} and the present paper have been written independently.

\vspace{0.2 cm}

Section~\ref{defddla} is devoted to several equivalent definitions of DLA. In Section~\ref{infvol}, we define the dynamics in infinite volume. In Section~\ref{info}, we obtain a bound on the speed of propagation of the information for a DDLA starting from a (sufficiently) horizontal interface. In Section~\ref{kestin}, we adapt Kesten's argument (see \cite{kes}) to obtain bounds on the speed of horizontal growth and vertical growth. Finally, Section~\ref{sec:last} explores the geometry of the infinite cluster.

\begin{small}
\begin{notation}
We use ``a.s.e.'' as an abbreviation for ``almost surely, eventually'', which means either ``almost surely, there exists $n_0\in\N$ such that for all $n\geq n_0$'' or ``almost surely, there exists $t_0\in\R_+$ such that for all $t\geq t_0$''.
\end{notation}
\end{small}

\subsection*{Acknowledgements}

I thank Vincent {Beffara} for proposing this topic of research to me, as well as for his advice and availability.
I am indebted to Vincent Beffara and Jessica {Todd} for allowing me to use in this paper some of their pictures.

\section{Presentation of DDLA}
\label{defddla}
\subsection{Some notation}

In this paper, when dealing with DDLA, we will think of $\mathbb{Z}^2$ as rotated by an angle of $+\frac{\pi}{4}$ (so that the particles we will throw move downward).  The vertices of $\mathbb{Z}^2$ will often be referred to as \defini{sites}. Let $$\textbf{E} := \{((a,b),(c,d))\in (\mathbb{Z}^2)^2:(a=c~\&~b=d+1)\text{ or }(a=c+1~\&~b=d)\}$$ be the set of the \defini{(directed) edges}; it endows $\mathbb{Z}^2$ with a structure of directed graph. We will denote by $d$ the graph-distance on $(\Z^2,\textbf{E})$, i.e.\ the $\|\cdot\|_1$-distance.
If $e = (P,Q)$ is an edge, we call $P$ the \defini{upper vertex} of $e$ and $Q$ its \defini{lower vertex}. They are referred to as $\textbf{u}(e)$ and $\textbf{l}(e)$. 

A \defini{downward directed symmetric random walk} is a Markov chain with transition probabilities $$p(P,Q) = \textbf{1}_{(P,Q) \in \textbf{E}}/2.$$ An \defini{upward directed symmetric random walk} is obtained with transition probabilities $$p(P,Q) = \textbf{1}_{(Q,P) \in \textbf{E}}/2.$$
When the starting point of a directed random walk is not specified, it is tacitly taken to be $(0,0)$.

The \defini{height} of $P=(a,b)$, denoted by $\textbf{h}(P)$, is $a+b$. Its \defini{horizontal deviation (relative to $0$)} is $\textbf{d}(P) := b-a$. The height (resp. horizontal deviation) of $P$ \defini{relative to $Q$} is $\textbf{h}(P) - \textbf{h}(Q)$ (resp. $\textbf{d}(P) - \textbf{d}(Q)$). 
If $A \subset \Z^2$, we set $$\h(A) := \sup_{P \in A} \h(P),~~\textbf{d}(A) := \sup_{P \in A} \textbf{d}(P)\text{  and  }|\textbf{d}|(A):= \sup_{P \in A} |\textbf{d}(P)|.$$ 
The \defini{line of height $n$} is $$L_n := \{(x,y)\in \mathbb{Z}_+^2:x+y=n\}.$$We also set $$L_{\leq n} := \{(x,y)\in \mathbb{Z}_+^2:x+y\leq n\}.$$ A line $L_n$ is said to be \defini{above} a set $S$ if $S \subset L_{\leq n-1}$.
Finally, if one fixes a subset $C$ of $\Z^2$, the \defini{activity} of a site $P \in \Z^2$ relative to $C$ is
$$
\textbf{act}_C(P) := \P[\forall n \in \N, P + W_n \not\in C]\cdot|\{e\in\textbf{E}~:~ \textbf{l}(e)\in C~\&~ \textbf{u}(e)=P\}|,
$$
where $(W_n)_{n \in \Z_{+}}$ is an upward directed symmetric random walk and $|~.~|$ stands for the cardinality operator. In what follows, we will consider a growing subset of $\Z^2$, called \defini{cluster}. The \defini{current activity} (or \defini{activity}) of a site $P$ will then be relative to the cluster at the considered time. The \defini{activity of the cluster} will be the sum over all sites of their activity.

\subsection{Definition in discrete time}

At time $0$, the cluster is $C_0 := \{(0,0)\}$. Assume that the cluster has been built up to time $n$, and that $C_n \subset L_{\leq n}$. To build $C_{n+1}$, choose any line $L_k$ above $C_n$. Then, independently of all the choices made so far, choose uniformly a point in $L_k$, and send a downward symmetric random walk $(W_n)$ from this point. If the walk intersects $C_n$, then there must be a first time $\tau$ when the walker is on a point of the cluster: let
$$
C_{n+1} := C_n\cup \{W_{\tau - 1}\} \subset L_{\leq n+1}.
$$
If the random walk fails to hit the cluster, we iterate the procedure $\{$choice of a starting point $+$ launching of a random walk$\}$ independently and with the same $k$, until a random walk hits the cluster, which will happen almost surely. This is obviously the same as conditioning the procedure to succeed.

The dynamics does not depend on the choices of $k$: indeed, choosing uniformly a point in $L_{k+1}$ and taking a step downward give the same measure to all the points of $L_k$ (and if a walker goes outside $\mathbb{Z}_+^2$, it will never hit the cluster). The dynamics is thus well-defined. We call this process \defini{Directed Diffusion-Limited Aggregation} (or \defini{DDLA}).

\begin{remark}
Since the process does not depend on the choices of $k$, we can take it as large as we want so that we may (informally at least) think of the particles as falling from infinity.
\end{remark}

Here is another process, which is the same (in distribution) as DDLA. We set $C_0 := \{(0,0)\}.$ Assume that we have built $C_n$, a random set of cardinality $n+1$. We condition the following procedure to succeed:

\begin{procedure}
We choose, uniformly and independently of all the choices made so far, an edge $e$ such that $\textbf{l}(e)\in C_n$. We launch an upward directed symmetric random walk from $\textbf{u}(e)$. We say that the procedure succeeds if the random walk does not touch $C_n$.
\end{procedure}

The particle added to the cluster is the upper vertex of the edge that has been chosen. Iterating the process, we obtain a well-defined dynamics. It is the same as the first dynamics: this is easily proved by matching downward paths with the corresponding upward ones.

\subsection{Definition in continuous time}
\label{conti}

We now define \defini{DDLA in continuous time}: this is the natural continuous time version of the second definition of DDLA. Let $((N^{e}_t)_{t \geq 0})_{e \in \textbf{E}}$ be a family of independent Poisson processes of intensity 1 indexed by the set of the directed edges. 
The cluster $C(0)$ is defined as $\{(0,0)\}$ and we set $T(0):=0$.

Assume that for some (almost surely well-defined) stopping time $T(n)$, the cluster $C(T(n))$ contains exactly $n$ particles. Then, wait for an edge whose lower vertex is in $C(T(n))$ to ring (such edges will be called \defini{growth-edges}). When the clock on a growth-edge $e$ rings, send an independent upward directed random walk from its upper vertex. If it does not intersect $C(T(n))$, add a particle at $\textbf{u}(e)$ and define $T(n+1)$ to be the current time. Otherwise, wait for another growth-edge to ring, and iterate the procedure.

This dynamics is almost surely well-defined for all times\footnote{i.e.\ $\sup_n T(n)$ is almost surely infinite} because it is stochastically dominated by first-passage percolation \cite{carm}. 
Markov chain theory guarantees that
$(C_n)_{n \in \mathbb{Z}_+}$ and $(C(T(n)))_{n\in\mathbb{Z}_+}$ are identical in distribution.

\begin{remark}
This definition in continuous time consists in adding sites at a rate equal to their current activity.
\end{remark}

\subsection{Some general heuristics}
Before going any further, it may be useful to know what is the theorem we are looking for and how the results presented in this paper may play a part in its proof.
In this subsection, we present \emph{highly informal heuristics that have not been made mathematically rigorous in any way yet}. They constitute a strategy for proving a shape theorem for DDLA.

\begin{nothm}
There is some convex compact $D$ of non-empty interior such that
$\frac{C(t)}t$ converges almost surely to $D$ for the Hausdorff metric. Besides, the boundary of $D$ consists in two segments and the $(-\pi/4)$-rotated graph of a concave function.
\end{nothm}

To prove such a result, the step 0 may be to prove that the width and height of the cluster both grow linearly in time, so that we would know that we use the right scaling. This would result from a stronger form of Fact~\ref{kestt}.

Provided this, one may use compactness arguments to prove that if there exists a unique ``invariant non-empty compact set'' $D$, then we have the desired convergence (to $D$). By invariance, we informally mean the following: if $t$ is large enough and if we launch a DDLA at time $t$ from $(tD)\cap\Z^2$, then $\frac{C(t+s)}{t+s}$ ``remains close'' to $D$.

This existence and uniqueness may be proved by finding a (maybe non-explicit) ordinary differential equation satisfied by the upper interface of $D$.
To do so, we would proceed in two steps. 

\subsubsection*{Step 1}

First of all, one needs to check that the upper interface is typically ``more or less'' the $(-\pi/4)$-rotated graph of a differentiable function. To do so, one would need to control fjords. Roughly speaking, we call \defini{fjord} the area delimited by two long and close arms of the cluster. Fjords are the enemies of both regularity and ``being the graph of a function''.

Here are some heuristics about fjords:
in Figure~\ref{ddla}, we observe that there are mesoscopic fjords far from the vertical axis and no such fjord close to it. We try to account for this.

\begin{definition}
We say that a site $P$ \defini{shades} a second one if it can catch particles that would go to the second site if $P$ was vacant.
\end{definition}

Assume that we have a behaviour as suggested by Figure~\ref{ddla}. If we are close to the vertical axis, the local slope is close to $0$. We will assume that, at any time, none of the two top-points of the arms delineating the considered fjord shades the other: they will thus survive (i.e.\ keep moving), following more or less upward directed random walks. By recurrence of the 2-dimensional random walk, we obtain that the two top-points will collide at some time, closing the fjord. To avoid the shading phenomenon, one needs a still unknown proper and \emph{tractable} definition of top-point. However, it seems quite reasonable to expect this phenomenon ``not to occur'' if the slope is close to $0$ because there is no initial shading.

When the slope gets higher, the shading phenomenon appears. If the slope is not too high, the ``lower top-point'' manages to survive  but it is hard for it to catch up with the upper one: this creates a fjord\footnote{Simulations suggest that this process builds fjords forming a deterministic angle with the vertical.}. If the slope is too high, the ``lower top-point'' stops catching particles: we are in the lower interface.

\subsubsection*{Step 2}

Now, we need to find an ODE satisfied by $r$, where $\alpha \mapsto r(\alpha)$ is the angular parametrization of the upper interface of $D$ and is defined on $(-\alpha_0,\alpha_0)$. We assume that $\alpha = 0$ corresponds to what we think of as the vertical.

Assume that one can launch a DDLA from an infinite line of slope $\tan(\alpha)$ (which is made possible by Section~\ref{infvol}) and define a deterministic\footnote{by ergodicity arguments} speed of vertical growth $\textbf{v}(\alpha)$. The set $D$ being invariant, $r(\alpha)\cdot\cos(\alpha)$ must be proportional to $\textbf{v}(\theta(\alpha))$, where $\tan(\theta(\alpha))$ stands for the local slope of $D$ at the neighborhood of the point defined by $\alpha$ and $r(\alpha)$.

More exactly, we have
$$
\left\{
    \begin{array}{cl}
        r(\alpha)\cdot\cos(\alpha) = c\cdot\textbf{v}(\theta(\alpha)) \\
        \tan(\alpha-\theta(\alpha)) = \frac{r'(\alpha)}{r(\alpha)}.
    \end{array}
\right.
$$
The knowledge of $\theta(\alpha_0)$ due to the previous step allows us to find $\alpha_0$. 

\vspace{0.15 cm}

\noindent\begin{small}Simulations suggest that $\alpha_0<\pi/4$; Corollary~\ref{coro:truc} is a weak result in this direction.\end{small}

\vspace{0.35 cm}

The last point that has to be checked is that the lower interface consists of two segments.  Assume that the points of the lower interface are of bounded local slope. From this and large deviation theory, one can deduce that it costs typically exponentially much for a particle to stick to the lower interface at large distance from the upper interface.\footnote{By this, we mean that, conditionally on an initial cluster, the probability that the next particle sticks to the lower interface at distance $d$ from the upper interface is lower than $e^{-\epsilon d}$, for some constant $\epsilon$.} This might allow us to compare DDLA with ballistic deposition, for which the upper interface converges to the graph of a concave function \cite{sep} and the lower interface converges to the union of two segments (use the Kesten-Hammersley Lemma \cite{smy}).

\section{DDLA in Infinite Volume}
\label{infvol}

In this section, we define Directed Diffusion-Limited Aggregation starting from a suitable infinite set. Notice that we make the trivial adjustment that the process now lives in $\Z^2$ instead of $\Ndeux$.

Here is a very informal description of the construction. Each edge has a Poisson clock and infinitely many upward directed symmetric random walks attached to it, everything being chosen independently. When a clock rings at some edge for the $k^{th}$ time, if its upper extremity is vacant and its lower one occupied, the $k^{th}$ random walk is sent and we see if it hits the current cluster or not: we add a particle if and only if the walk does not hit the cluster.

In finite volume, this is not problematic because we can (almost surely) define  the \defini{first (or next) ringing time}: since we only need to know the state of the cluster just before we send the walk, the construction is done. In the case of an infinite initial cluster, in any non-trivial time interval, there are almost surely infinitely many ringing times to consider.\footnote{This problem is essentially the same as the one that makes impossible a discrete-time construction: we cannot throw our particles the one after the other because there is no uniform probability measure on an infinite countable set.} To define the dynamics, a solution is to show that, for all $(P_0,T_0)\in\Z^2\times\R_+^\star$, what happens at $P_0$ before time $T_0$ just depends on some random finite set of edges. Indeed, in this case, we can apply the construction in finite volume. This is the idea behind {Harris}-like constructions. See e.g.~\cite{sep} for an easy {Harris}-like construction of ballistic deposition, the local and monotonic version of DDLA.

\vspace{0.3 cm}

\label{construction}
Rigourously, the construction goes as follows. Let $((N^{e}_t)_{t \geq 0})_{e \in \textbf{E}}$ be a family of independent Poisson processes of intensity 1 indexed by the set of the directed edges. 
Let $((W^{e,k}_n)_{n \in \mathbb{N}})_{e \in \textbf{E}, k \in \mathbb{N}^\star}$ be a family of independent upward directed symmetric random walks (simply referred to as random walks in this section) indexed by $\textbf{E} \times \mathbb{N}^\star$.

\begin{notations}
Let $r_{\theta}$ be the rotation of centre $(0,0)$ and angle $\theta$. For $b\in \R^\star_+$, let $$\mathcal{C}_b := r_{-\pi/4}\left(\{(x,y)\in \R^2 : |y|\geq b|x| \}\right)$$ be the \defini{$b$-cone}\label{defcone} and let
$$
\mathcal{W}_b := r_{-\pi/4}\left(\{(x,y)\in \R^2 : |y| = (b+1)x \}\right)
$$
be the \defini{$b$-wedge}.
\begin{small}(Remember that we think of $\Z^2$ as rotated by an angle of $+\pi/4$.)\end{small} When $b$ is not specified, it is taken to be equal to the $a$ introduced in the next line.
\end{notations}

\begin{figure}[h!]
\centering
\includegraphics{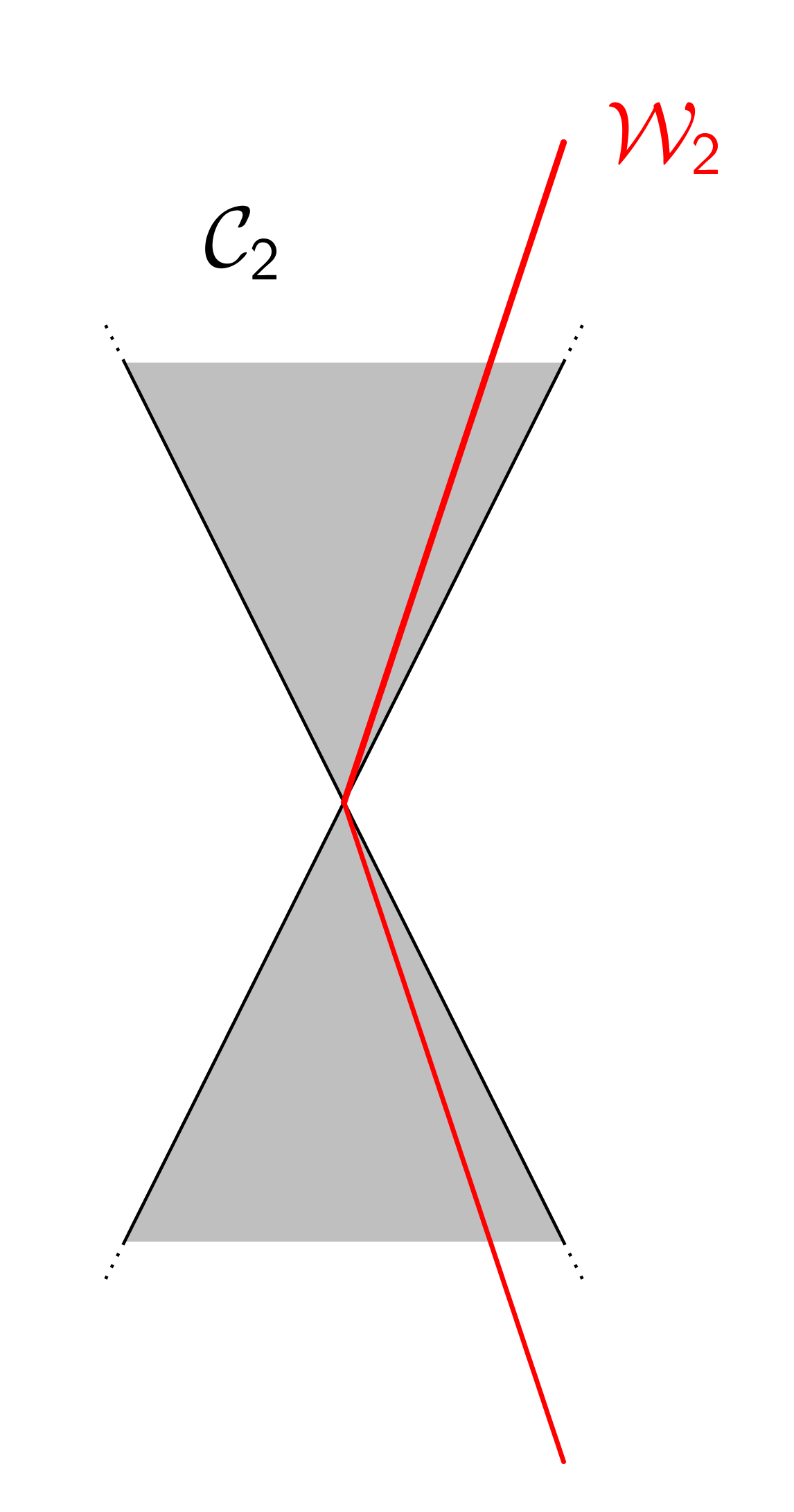}
\caption{The $b$-cone and the $b$-wedge for $b=2$.}
\end{figure}

\begin{asclust}
There is some $(a,K)\in\R_+^2$ such that for all $P\in C$,
$$
(P+(K,K)+\mathcal{C}_a)\cap C = \varnothing
~~~\text{and}~~~
(P+s((K,K)+\mathcal{C}_a))\cap C = \varnothing,
$$
where $s$ maps $Q\in\Z^2$ to $-Q$.
\end{asclust}

\begin{figure}[h!]
\centering
\includegraphics{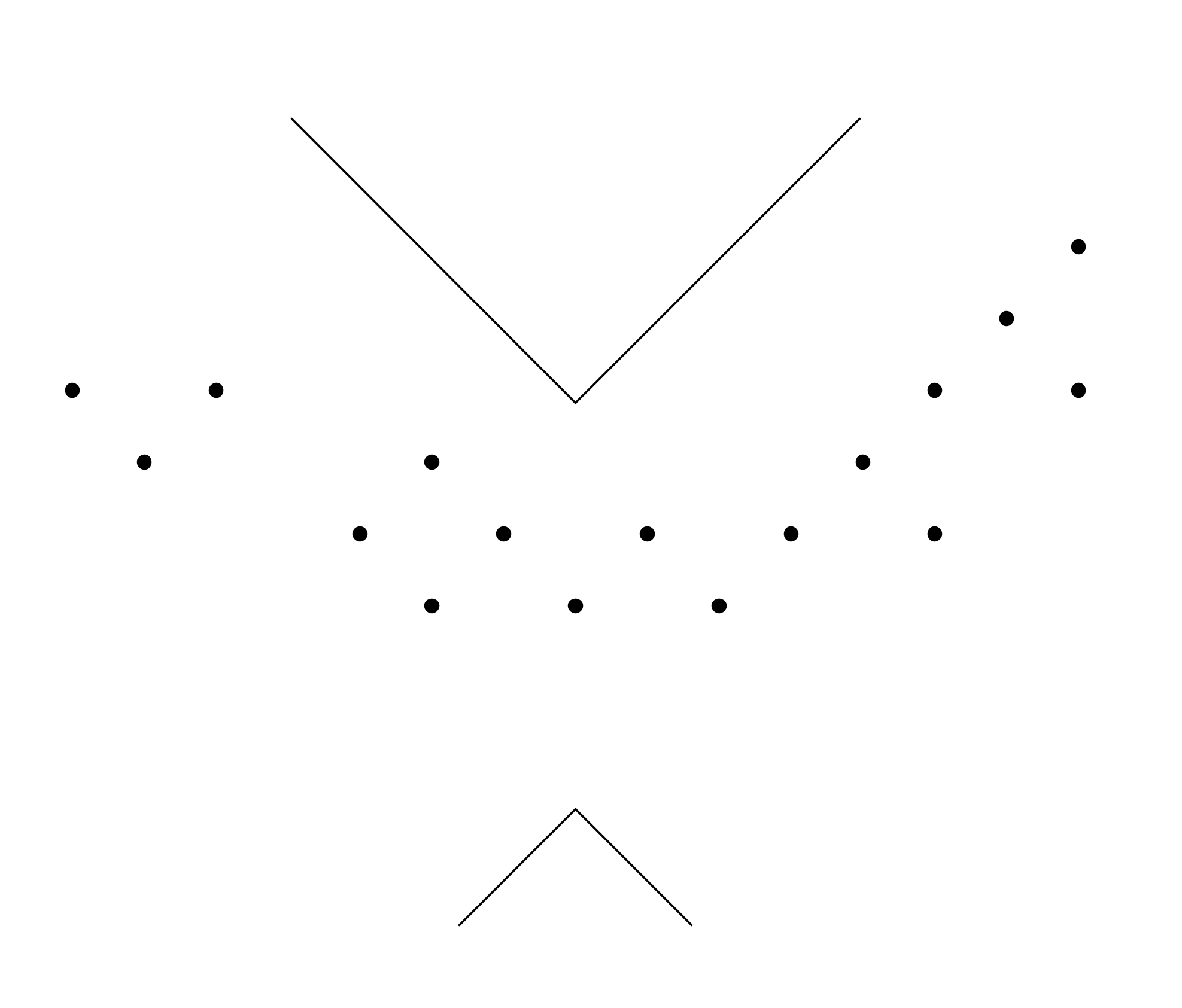}
\caption{A finite portion of a cluster satisfying the assumption for $(a,K)=(1,2)$.}
\end{figure}

Let us fix $T_0 \in\R_+^\star.$ Let us pick a site $P_0$ in $\Z^2$ and try to decide whether we add it to the cluster before time $T_0$ or not and, if so, when. If this can be done with probability 1, then the dynamics is almost surely well-defined. \begin{small}Indeed, it is enough to check every $(P_0,T_0)$ in $\Z^2\times\N^\star$.\end{small}

\begin{definitions}A site $P$ is said to be \defini{activated} if there is an upward directed path $(Q_0,\dots,Q_n)$ such that:
\begin{itemize}
\item $Q_0\in C$,
\item $Q_n=P$,
\item there is an increasing $n$-tuple $(t_1,\dots,t_n)$ such that $t_n\leq T_0$ and for every $k\in\{1,\dots,n\}$, the clock at $(Q_{k-1},Q_k)$ rings at time $t_k$.
\end{itemize}
The model consisting in adding a vertex $P$ before time $t$ if and only if the condition above is satisfied for $t$ instead of $T_0$ is called \defini{Directed First-Passage Percolation}  (or \defini{DFPP}). We also say that a directed edge $(P,Q)$ is \defini{activated} if there is an upward directed path $(Q_0,\dots,Q_n)$ such that:
\begin{itemize}
\item $Q_0\in C$,
\item $Q_{n-1}=P$
\item $Q_n=Q$,
\item there is an increasing $n$-tuple $(t_1,\dots,t_n)$ such that $t_n\leq T_0$ and for every $k\in\{1,\dots,n\}$, the clock at $(Q_{k-1},Q_k)$ rings at time $t_k$.
\end{itemize}
For any directed edge $(P,Q)$, each time the clock at $(P,Q)$ rings, if $(P,Q)$ belongs to the current DFPP cluster, then we \defini{launch} a new random walk from $Q$; the $k^\text{th}$ random walk to be launched is $Q+W^{(P,Q),k}$.
\end{definitions}

\begin{fact}
\label{fact:expo}
The probability that $P\in\Z^2$ is activated decays exponentially fast in $d(P,C)$.
\end{fact}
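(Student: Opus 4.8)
The plan is to bound the probability by a union bound over directed paths joining $C$ to $P$, to control each term by a ``cascade of clocks'' computation, and then to sum. Write $d := d(P,C)$. If $P$ is activated, there is a directed path $(Q_0,\dots,Q_n)$ with $Q_0\in C$ and $Q_n=P$ whose clocks ring at increasing times $t_1<\dots<t_n\le T_0$. Every edge of $\textbf{E}$ joins two sites whose heights differ by exactly one, so such a path is strictly monotone in height; in particular it is self-avoiding and uses $n$ pairwise distinct edges $e_1,\dots,e_n$, and its length satisfies $n\ge d(Q_0,P)\ge d$. Hence, denoting by $A_\gamma$ the event that the clocks along a path $\gamma$ admit an increasing threading of ring times $\le T_0$,
$$\P[P\text{ activated}]\le \sum_{n\ge d}\ \sum_{\gamma}\P[A_\gamma],$$
the inner sum running over directed paths $\gamma$ of length $n$ with $\textbf{l}$-endpoint $P$.

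Next I would compute $\P[A_\gamma]$ exactly. Set $S_0:=0$ and, for $1\le k\le n$, let $S_k$ be the first ring of $e_k$ strictly after $S_{k-1}$. The greedy rule minimises every ring time simultaneously, so $A_\gamma=\{S_n\le T_0\}$. Because the edges $e_k$ are distinct, their Poisson processes are independent, and the memoryless property makes each increment $S_k-S_{k-1}$ an $\mathrm{Exp}(1)$ variable independent of the past; thus $S_n$ is a sum of $n$ i.i.d.\ $\mathrm{Exp}(1)$ variables. A Chernoff bound, optimised at $\theta=n/T_0-1$, gives
$$\P[A_\gamma]=\P[S_n\le T_0]\le \inf_{\theta>0} e^{\theta T_0}(1+\theta)^{-n}\le\left(\frac{eT_0}{n}\right)^n\qquad(n>T_0).$$

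Then I would count paths. Each site is the $\textbf{l}$-endpoint of exactly two edges of $\textbf{E}$, so reconstructing $\gamma$ backwards from $P$ offers at most two choices per step, whence at most $2^n$ paths of length $n$. Combining the three ingredients,
$$\P[P\text{ activated}]\le\sum_{n\ge d}2^n\left(\frac{eT_0}{n}\right)^n=\sum_{n\ge d}\left(\frac{2eT_0}{n}\right)^n.$$
Once $d\ge 4eT_0$, every summand is at most $2^{-n}$, the series is dominated by its first term, and one reads off $\P[P\text{ activated}]\le 2\,(2eT_0/d)^d$. This bound depends only on $T_0$ and $d(P,C)$, it decays faster than any exponential in $d$, and for the finitely many smaller values of $d$ the trivial bound by $1$ suffices; so there exist constants $A,\lambda>0$ (functions of $T_0$ only) with $\P[P\text{ activated}]\le A\,e^{-\lambda d(P,C)}$, which is what is claimed (indeed with room to spare).

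The computation is standard, and the only step deserving genuine care is the exact identity $\P[A_\gamma]=\P[S_n\le T_0]$: it relies on the edges of a height\nobreakdash-monotone path being distinct, so that the clocks threaded in the cascade are truly independent and the memoryless increments are genuinely $\mathrm{Exp}(1)$. Given that, the remaining ``entropy versus energy'' balance is immediate, since the super-exponentially small per-path probability $(eT_0/n)^n$ crushes the $2^n$ growth of the number of paths; no delicate large-deviation input is needed.
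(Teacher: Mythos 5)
Your proof is correct, and it takes a genuinely different route from the paper's. The paper handles general $T_0$ by slicing time into $k_0$ windows of length $1/2<\ln 2$, observing that within each window the set of ringing edges forms a subcritical (directed) percolation, and then arguing via nested balls $B(P,(k_0-k)n)$ that the activation front typically advances at most $n\approx d(P,C)/k_0$ per window, so that it never reaches $P$; the exponential decay is imported from the exponential decay of subcritical percolation. You instead run a direct first-moment computation over directed paths, and your key observation --- which the paper's argument only uses implicitly across windows --- is that the \emph{increasing} threading of ring times forces the greedy hitting times $S_k$ to form a sum of $n$ i.i.d.\ $\mathrm{Exp}(1)$ variables (the edges of a height-monotone path being distinct, hence carrying independent clocks), so that each path of length $n$ contributes at most $(eT_0/n)^n$, which crushes the $2^n$ path count. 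This buys you a cleaner, self-contained argument valid for all $T_0$ at once, with an explicit super-exponential bound of order $(2eT_0/d)^d$ rather than merely exponential decay; the paper's renormalization is softer but illustrates the comparison with subcritical percolation that it reuses elsewhere (e.g.\ on page~\pageref{page:decroissance}). The two steps of yours that warranted checking --- the exact identity $A_\gamma=\{S_n\le T_0\}$ via minimality of the greedy threading, and the independence of the increments $S_k-S_{k-1}$ --- both hold as you state them, so there is no gap.
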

\begin{rem}
Fact~\ref{fact:expo} is a direct consequence of the exponential decay of subcritical percolation if $T_0 < \ln 2$.
\end{rem}
\begin{proof}
Let $B(P,n)$ denote the $\|.\|_1$-ball of centre $P$ and radius $n$. Let $k_0\in\N^\star$. If the following holds for $n = \lfloor\frac{d(P,C)}{k_0}\rfloor-1$:
$$
\begin{array}{c}
\forall~0<k \leq k_0,\\ B(P, (k_0-k+1)n)\text{ contains all the vertices that can be connected to}\\
B(P, (k_0-k)n)\text{ by edges whose clock rings between }\frac{k-1}{2}\text{ and }\frac{k}{2},
\end{array}
$$
then $P$ cannot belong to the activation cluster of $C$ for $T_0\leq k_0/2$. But, by the exponential decay of activation percolations over a time-range equal to $1/2 < \ln2$,\label{page:decroissance} the probability that this condition is not satisfied is lower than 
$$
\sum_{k=1}^{k_0} |B(P, k_0n)|ce^{-n/c},
$$
which decays exponentially fast in $n$.
\end{proof}

\begin{definitions}
Let $P\in C$. The \defini{wedge based at $P$} is defined as $P+\mathcal{W}$. It divides $\R^2$ into two connected components. A point of $\R^2$ that belongs to the same connected component as $P+(-1,1)$ is said to be to the \defini{left} of the wedge based at $P$. The set of the points of $\Z^2$ that are to the left of the wedge based at $P$ is denoted by $\mathsf{Left}(P)$. The site $P$ is said to be \defini{good} if it satisfies the following conditions:
\begin{itemize}
\item no activated directed edge $(P,Q)$ satisfies ``$P\in \mathsf{Left}(P) \Leftrightarrow Q\notin \mathsf{Left}(P)$'',
\item every random walk launched from a activated site of $\mathsf{Left}(P)$ remains in $\mathsf{Left}(P)$.
\end{itemize}
The site $P$ is said to be \defini{quasi-good} if it satisfies the following conditions:
\begin{itemize}
\item only finitely many activated edges satisfy ``exactly one extremity of the considered edge belongs to $\mathsf{Left}(P)$'',
\item only finitely many walks that are launched from an activated edge whose extremities belong to $\mathsf{Left}(P)$ do not stay in $\mathsf{Left}(P)$, and each of them takes only finitely many steps outside $\mathsf{Left}(P)$.
\end{itemize}
\end{definitions}

There is a constant $c'=c'(a,K)$ such that for every $P\in C$ and every $Q\in\Z^2$,  the following inequality holds:
$$
\max(d(Q,C), d(Q,P+\mathcal{W}))\geq c'(d(Q,P)-c').
$$
For $P\in C$, consider the following events:
\begin{itemize}
\item to an edge $(P',Q')$ such that ``$P'\in \mathsf{Left}(P) \Leftrightarrow Q'\notin \mathsf{Left}(P)$'', we associate the event ``the directed edge $(P,Q)$ is activated'',
\item to $(k,n)\in\N^\star$ and $e$ a directed edge the two extremities of which belong to $\mathsf{Left}(P)$, we associate the event ``the $k^\text{th}$ random walk at $e$ is launched and its $n^\text{th}$ step lands outside $\mathsf{Left}(P)$''.
\end{itemize}
It follows from the estimate above and large deviation theory that  the events under consideration have summable probability. The Borel-Cantelli Lemma implies that almost surely, only finitely many of these events occur: $P$ is thus almost surely quasi-good. By independence,  the site $P$ has positive probability to be good. In fact, this proof being quantitative, we know that the probability that $P$ is good can be bounded below by some positive constant $\varepsilon=\varepsilon(a,K)$.

\begin{fact}
\label{fact:goodas}
Assume that the horizontal deviation $\textbf{\emph{d}}$ is not bounded above in restriction to $C$. Then, almost surely, there is a good site $P$ such that $P_0\in\mathsf{Left}(P)$. 
\end{fact}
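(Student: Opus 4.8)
The plan is to combine the geometric effect of the hypothesis with the uniform lower bound $\varepsilon=\varepsilon(a,K)$ on the probability that a site is good, which was established in the paragraph preceding the statement. Recall that here the initial cluster $C$ is deterministic and only the clocks and random walks are random, so the candidate sites will be deterministic and goodness will be the random event. First I would record the geometric reduction. Because we think of $\Z^2$ as rotated by $+\pi/4$, a large value of $\textbf{d}(P)$ pushes $P$ far to one side, and the region $\mathsf{Left}(P)$ --- the component of $\R^2\setminus(P+\mathcal{W})$ containing $P+(-1,1)$ --- sweeps across the plane as $\textbf{d}(P)$ grows. Using the explicit form of $\mathcal{W}$, for the fixed $P_0$ there is a threshold $D_0=D_0(P_0,a)$ such that every $P$ with $\textbf{d}(P)\geq D_0$ satisfies $P_0\in\mathsf{Left}(P)$. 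Since $\textbf{d}$ is unbounded above on $C$ by hypothesis, the set $\{P\in C:\ \textbf{d}(P)\geq D_0\}$ is infinite, and since goodness is only defined for sites of $C$ it suffices to prove that, almost surely, at least one site of this infinite subset of $C$ is good.

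Next I would set up the probabilistic engine. We already know that each individual $P\in C$ is good with probability at least $\varepsilon>0$, uniformly in $P$. If the events $\{P\text{ is good}\}$ were independent along an infinite sequence of candidates, the divergent-series form of the Borel--Cantelli lemma would immediately yield infinitely many good sites almost surely. The plan is therefore to extract from $\{P\in C:\ \textbf{d}(P)\geq D_0\}$ a sequence $P_1,P_2,\dots$ with $\textbf{d}(P_k)\to\infty$, increasing fast enough that the pieces of randomness responsible for the goodness of the $P_k$ become disjoint, and then to apply the second Borel--Cantelli lemma, or, to be safe about residual dependence, its conditional Lévy extension (bounding $\P[P_k\text{ good}\mid\mathcal{F}_{k-1}]$ below by a positive constant for a suitable filtration $(\mathcal{F}_k)$ adapted to the separated regions).

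The crux --- and the step I expect to be the main obstacle --- is the localization needed to turn ``good with positive probability'' into ``asymptotically independent good events'', since goodness of $P$ is a priori a global event constraining the activation cluster and the launched walks throughout all of $\mathsf{Left}(P)$. To localize it I would use Fact~\ref{fact:expo}: up to an event of exponentially small probability the activation cluster stays within a thin neighbourhood of $C$, so the only activated edges that can cross the wedge boundary $P+\mathcal{W}$ lie near the intersection of $P+\mathcal{W}$ with that neighbourhood. Because both $C$ (by the cone assumption) and the translates $P_k+\mathcal{W}$ are cone-shaped, these intersection regions drift to infinity and separate as $\textbf{d}(P_k)\to\infty$, so choosing the $P_k$ sufficiently spread out makes the governing regions disjoint. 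The crossing random walks are handled exactly as in the preceding paragraph: a walk started deep inside $\mathsf{Left}(P)$ reaches $P+\mathcal{W}$ only at large-deviation cost, so the walks that matter are again confined near the wedge boundary, and the summability estimate used there provides, conditionally on the separated regions, a uniform positive lower bound on goodness. Assembling these pieces --- truncating to the regions $R_k$, bounding the truncation errors by a summable exponential tail, and using independence of the environment across the disjoint $R_k$ --- yields the approximate independence (or the conditional bound $\P[P_k\text{ good}\mid\mathcal{F}_{k-1}]\geq\varepsilon'$). Borel--Cantelli then produces infinitely many good $P_k$ almost surely, and any one of them is a good site $P$ with $P_0\in\mathsf{Left}(P)$, as required.
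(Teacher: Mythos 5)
Your route is genuinely different from the paper's. The paper does not pre-select a spread-out deterministic sequence and prove approximate independence; it runs an \emph{adaptive} exploration. One reveals exactly the randomness deciding whether a first candidate $P$ is good (by exploring the DFPP cluster of a neighbourhood of $P+\mathcal{W}$ in reverse time, together with everything attached to $\mathsf{Left}(P)$). If $P$ fails, the key input is that $P$ is almost surely \emph{quasi-good}: the failure is witnessed by only finitely many crossing edges and finitely many escaping walks, each spending only finitely many steps outside $\mathsf{Left}(P)$. One may therefore choose the next candidate $P'$ \emph{as a function of the revealed information}, far enough that none of the revealed obstructions concerns $P'$, so that $P'$ is good with conditional probability at least $\varepsilon$; iterating gives failure probability at most $(1-\varepsilon)^k$ after $k$ candidates. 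This buys a complete bypass of the localization step that you correctly identify as the crux of your plan.

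Concerning that step: the assertion that ``choosing the $P_k$ sufficiently spread out makes the governing regions disjoint'' is false as literally stated. The events $\{P_k\text{ good}\}$ are determined by \emph{nested}, not disjoint, pieces of randomness: goodness of $P_k$ constrains every walk launched from every activated site of $\mathsf{Left}(P_k)$, and $\mathsf{Left}(P_k)$ contains the whole of $\mathsf{Left}(P_j)$ for the earlier candidates, so no amount of spreading makes the relevant $\sigma$-algebras disjoint. What can be made disjoint are truncated events $A_k$ (``no bad event for $P_k$ witnessed entirely inside a window $R_k$ around $P_k$''), which satisfy $\{P_k\text{ good}\}\subseteq A_k$ (hence $\P[A_k]\geq\varepsilon$) and $A_k\cap B_k\subseteq\{P_k\text{ good}\}$ for a complementary event $B_k$ controlled by the tail of the summable series appearing in the paragraph before the Fact. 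To conclude you need infinitely many $A_k$ to occur (second Borel--Cantelli, using independence of the $A_k$) \emph{and} only finitely many $B_k^c$ to occur, i.e.\ $\sum_k\P[B_k^c]<\infty$; a fixed window size only yields $\P[B_k^c]\leq\delta$ uniformly, which is not summable, so the windows $R_k$ must grow with $k$ and the $P_k$ must be spread accordingly. With that correction your argument closes and is a legitimate, if heavier, alternative; the paper's use of quasi-goodness exists precisely to avoid quantifying any of this truncation.
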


Taking Fact~\ref{fact:goodas} for granted, it is not hard to conclude. If \textbf{d} is bounded, then the assumption on $C$ guarantees that $C$ is finite and the process has already been defined.  We may thus assume that $C$ is infinite. If  $\textbf{d}$ is neither bounded above nor bounded below in restriction to $C$, then Fact~\ref{fact:goodas} and its symmetric version (which follows from it) imply the following: almost surely, there are a wedge to the right of $P_0$ and a (symmetric) wedge to the left of $P_0$ that are not crossed by the DFPP or any walk launched from between these wedges  before time $T_0$. Since the intersection of $C$ with the area delineated by the wedges is finite, the construction is once again reduced to finite volume. (The definition of goodness guarantees that the fate of the considered area can be defined without having to look outside it.) Finally, if $\textbf{d}$ is only bounded in one direction (say above), then one can find a site $P$ that is good and such that $P\in \mathsf{Left}(P)$: since $C\cap\mathsf{Left}(P)$ is finite, the construction in finite volume can be used.

\vspace{0.4 cm}

\begin{proof}[Proof of Fact~\ref{fact:goodas}]
Let $P$ be a point in $C$ such that $P_0\in\mathsf{Left}(P)$. (Such a point exists owing to the geometric assumption on $C$ and because $\textbf{d}$ is not bounded above in restriction to $C$.) Explore the DFPP cluster of the $1$-neighbourhood of $P+\mathcal{W}$ in reverse time: starting at time $T_0$ from $P+\mathcal{W}$, one follows the downward DFPP process associated to the same clocks. At time $t=0$, this exploration has visited a random set of sites and edges. The \defini{area explored at step 1} is this random set, together with all the vertices and edges in $\mathsf{Left}(P)$. By looking at the clocks and walks associated to this area, one can see if $P$ is good or not.
If this is the case, we stop the process. Otherwise, since we know that $P$ is \emph{quasi-}good, up to taking $P'\in C$ far enough to the right of $P$, we can assume that the information revealed so far yields no obstruction to the fact that $P'$ is good. Since we have made irrelevant all the negative information, the probability that $P'$ is good conditionally on the fact that $P$ is not good is at least the $\varepsilon$ introduced before Fact~\ref{fact:goodas}. Iterating this process, we find a good site $P$ such that $P_0\in\mathsf{Left}(P)$ in at most $k$ steps with probability at least $1-(1-\varepsilon)^k$. Thus, almost surely, such a site exists.
\end{proof}

\vspace{0.5 cm}

\begin{remarks}The dynamics is measurably defined and does not depend on the choices that are made. Besides, the $t_0$-dynamics $(t_0\in\R_+^\star)$ are coherent. More exactly, at (typical) fixed environment, if we apply the previous construction with $(P,t_0)$ and $(P,s_0)$, if the first construction says that $P$ is added at time $T<s_0$, then so does the second construction.

Also notice that this dynamics defines a simple-Markov process relative to the filtration
$$
\mathcal{F}_t := \sigma(N^e_s:s \leq t, e \in \textbf{\emph{E}}) \vee \sigma(W^{e,m}_k:e \in\textbf{\emph{E}},1\leq m \leq N^e_t,k\geq 0).
$$
\end{remarks}

\section{Transport of information}
\label{info}

In this section, we prove bounds on the speed of propagation of the information for a horizontal initial cluster. Such a control guarantees a weak (and quantitative) form of locality, which may help studying further DDLA.

\vspace{0.2 cm}
Let us consider a DDLA launched with the initial interface 
$$D := \{P \in \Z^2 ~:~ \h(P) = 0\}.$$
Before stating the proposition, we need to introduce some terminology.
Let $F \Subset \Z^2$, i.e.\ let $F$ be a non-empty finite subset of $\Z^2$. We want to define where some information about $F$ may be available. Formally, we want our area of potential influence (a random subset of $\Z^2$ depending on time) to satisfy the following property: if we use the same clocks and walks to launch a DDLA from $D$ and one from $D\Delta G$ with $G\subset F$, the clusters will be the same outside the area of potential influence at the considered time. In fact, the way this area is defined in this section, we even know that the pair {\it$($area$,$ data of the particles present in the cluster outside the area$)$} satisfies the (say weak) Markov Property.

We define this area as follows\footnote{Some looser definition may be proposed but this one is used because it is tractable.}. Instead of saying that a site of $\Z^2$ --- in the cluster or not --- belongs to the area of potential influence, we will say that it is \defini{red}, which is shorter and more visual. A non-red site belonging to the cluster will be colored in \defini{black}. Initially, 
$$
\Red_0 := F
$$
is the red area. Then, a site $P$ becomes red when one of the following events occurs:
\begin{itemize}
\item $P = \textbf{u}(e)$, the site $\textbf{l}(e)$ is red, the clock on $e$ rings and the launched random walk avoids black sites;

\item $P = \textbf{u}(e)$, the site $\textbf{l}(e)$ is black, the clock on $e$ rings and the launched random walk avoids black sites and goes through at least one red site.
\end{itemize}

It is not clear that this is well-defined, for the same reason that makes the definition in infinite volume uneasy, but we will see in the proof of Proposition~\ref{infoh} that some larger\footnote{Of course, as in all {Harris}-like constructions, this set is larger than some set that is not defined yet !} 
set is finite almost surely for all times, so that the construction boils down to finite volume, entailing proper definition of the red area.

By construction, it is clear that if it is well-defined, red is a good notion of area of potential influence.

\begin{notations}
$\Red_t$ will denote the red area at time $t$. We set $\hei_t := \h(\Red_t)$ and $\dev_t := |\textbf{d}|(\Red_t)$. This holds only for this section.
\end{notations}

\begin{proposition}
\label{infoh}
If $F \Subset \Z^2$ and if we choose $D$ as initial cluster, then $(\Red_t(F))_{t\geq 0}$ is well-defined and a.s.e.
$$
\hei_t \leq c_0\cdot t\ln t\text{  and  }\dev_t \leq c_0\cdot t^2\ln t
$$
for some deterministic constant $c_0$ independent of $F$.
\end{proposition}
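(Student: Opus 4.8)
\emph{The plan} is to reduce everything to a first-moment estimate on a directed first-passage percolation (DFPP) skeleton, and then to control separately the vertical and the horizontal reach of the red region. First I would settle well-definedness by exhibiting a finite set that contains the red region together with all the walks one has to consult. For the directed (first) mechanism, red is dominated by the DFPP cluster of $F$: dropping the constraints ``the walk avoids black'' and ``the walk meets a red site'' only enlarges the red set, and what remains is exactly the activation process of Section~\ref{infvol}. By the argument of Fact~\ref{fact:expo} (domination by subcritical percolation over time windows of length $<\ln 2$, iterated), this cluster is a.s.\ finite for every fixed $t$. The walks that must be launched start from sites of this finite cluster and leave a large ball only with exponentially small probability (again Fact~\ref{fact:expo}), so the exploration consults finitely many clocks and walks; the finite-volume construction then applies and $(\Red_t(F))_{t\ge 0}$ is well-defined.

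\emph{Vertical reach.} The key structural remark is that the second mechanism never raises the ceiling of the red region: the walk it uses climbs up to an \emph{already} red site, while the site it colours, namely $\uuu(e)$, lies strictly below it. Hence $\hei_t$ can increase only through the first mechanism, i.e.\ along a time-increasing directed path of ringing edges issued from $F$. I would therefore bound
$$
\P[\hei_t\ge h]\ \le\ \E\big[\#\{P\ \text{red},\ \h(P)\ge h\}\big]\ \le\ |F|\sum_{m\ge h}2^{m}\,\P[\Gamma(m,1)\le t],
$$
where $2^{m}$ counts the upward directed paths of length $m$ and $\P[\Gamma(m,1)\le t]=\P[\mathrm{Pois}(t)\ge m]$ is the chance that such a path is activated before time $t$. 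Using $\P[\Gamma(m,1)\le t]\le (et/m)^{m}$, the summand is $(2et/m)^{m}$, which for $h=c_0\,t\ln t$ is below $t^{-c_0 t}$ once $t$ is large. Summing over integer times, applying Borel--Cantelli, and filling the gaps by monotonicity of $t\mapsto\hei_t$ and of $t\mapsto t\ln t$ yields $\hei_t\le c_0\,t\ln t$ a.s.e.

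\emph{Horizontal reach.} This is where the walks genuinely act. I would trace the genealogy of the farthest red site: a time-increasing alternation of first-mechanism steps (each moving $\textbf{d}$ by $\pm1$) and second-mechanism teleports (each moving it by at most the horizontal travel of the walk used). Two ingredients control it. First, each teleport uses a walk that climbs to a red site, hence of length at most $\hei_t\le c_0 t\ln t$ by the previous step, so each genealogy step changes $\textbf{d}$ by at most $c_0 t\ln t$. Second, the genealogy is a time-increasing chain of ringing edges, so its length $N$ obeys the \emph{same} Poisson-tail first-moment bound and is a.s.e.\ at most $c_1 t$. Combining, $\dev_t\le N\cdot c_0 t\ln t\le c_0' t^2\ln t$ a.s.e. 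To turn this into a rigorous statement I would run a single first-moment computation over space--time chains, in which the time cost $t^{N}/N!$ of the $N$ ringing edges forces $N\lesssim t$, while the Gaussian tail of a symmetric walk's horizontal coordinate bounds each displacement.

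The main obstacle is precisely this last, horizontal, estimate: the red region is not monotone and the teleports break the directed structure, so one cannot simply read off $\dev_t$ from the DFPP skeleton. The difficulty is to organise the compounding of the walk excursions---controlling simultaneously their number, their lengths, and their horizontal displacements---inside one Borel--Cantelli argument, all the while keeping the auxiliary explored set finite so that the reduction to finite volume stays valid.
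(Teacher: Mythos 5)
The central gap is your treatment of the second colouring mechanism, the one where $\textbf{l}(e)$ is black and the walk launched from $\textbf{u}(e)$ merely has to pass through \emph{some} red site. This mechanism can colour a site that is \emph{not} in the DFPP cluster of $F$: the black cluster grows from the whole line $D$, so any $\textbf{u}(e)$ arbitrarily far to the side of $F$ becomes a candidate as soon as one of its upward walks happens to hit the red region. Consequently neither your well-definedness argument (``red is dominated by the DFPP cluster of $F$'') nor your vertical bound (``$\hei_t$ can increase only along a time-increasing directed path of ringing edges issued from $F$'') is correct as stated: the genealogy of the highest red site is an alternation of directed chains and of these sideways/downward ``teleports'', and the last directed chain may issue from a teleport target rather than from $F$. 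Your first-moment sum $|F|\sum_m 2^m\,\P[\Gamma(m,1)\le t]$ therefore does not range over all genealogies; tellingly, if it were valid it would already give $\hei_t=O(t)$ rather than $O(t\ln t)$, and the $\ln t$ in Proposition~\ref{infoh} is exactly the price the paper pays for a union bound over the polynomially many horizontal positions from which the relevant directed chain can start. The paper's route is different. Step~1 shows that $\Red_t$ stays inside a triangle $S_{\lfloor e^{ct}\rfloor}$, the key geometric point being that $\h+|\textbf{d}|$ is non-decreasing along upward directed paths, so a walk launched (in the upper half-plane) outside $S_n$ can never reach a red site inside $S_n$: teleports cannot enlarge the triangle, and only the ringing of one of the $O(n)$ decisive boundary edges can. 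This settles well-definedness and gives an a priori exponential bound. The height is then bounded by the height of DFPP \emph{from the whole line $D$} restricted to a strip of the currently known width (Lemma~\ref{lem:boot}, whence the logarithm of that width), the width by time multiplied by the number of decisive edges in a strip of the currently known height (Lemma~\ref{lem:strap}), and one bootstraps: exponential, then $t^2$, then $t^3$, then $t\ln t$, then $t^2\ln t$.

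Your horizontal step does have the right shape --- a number of growth events times a maximal displacement per event controlled by the height --- and is in spirit close to Lemma~\ref{lem:strap}. But you flag it yourself as unresolved, and it rests on the unproved claim that the genealogy length $N$ is $O(t)$, which suffers from the same entropy problem: the chain is not a directed path, and counting the teleport targets (equivalently, summing the hitting probabilities of the walks over all possible launch sites below a given red site) is precisely the work that has not been done. To repair your approach you would need a genuine first-moment bound over space--time chains including teleports; the paper's cone-stability observation is the cheaper alternative, since it removes the teleports from the counting problem altogether.
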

\begin{proof}
Without loss of generality, we may assume that $\Red_0 = \{(0,0)\}$. \begin{small}Indeed, if one takes $F$ to be $\{(0,0)\}$, then for any finite subset $G$ of $\textbf{h}^{-1}(\N)$, the event $G\subset\Red_1$ has positive probability.\end{small}

\vspace{0.2 cm}

The rough idea of the proof is the following:
\begin{enumerate}
\item We prove that the red area cannot be extremely wide.
\item We show that if it is not very wide, it is quite small (in height).
\item We prove that if it is small, it is narrow.
\item We initialize the process with the first step and then iterate Steps 2 and Step 3, allowing us to conclude.

\end{enumerate}

\subsubsection*{Step 1: At most exponential growth}

For $n\in\N$, we set
$$
S_n:=\left\{P \in \Z^2 : \h(P) + |\textbf{d}(P)| \leq 2n\text{ and }\h(P) > 0 \right\}.
$$
We consider the following model.

\begin{figure}[h!]
\vspace{0.4 cm}
\begin{center}
\includegraphics[width = 8.2 cm]{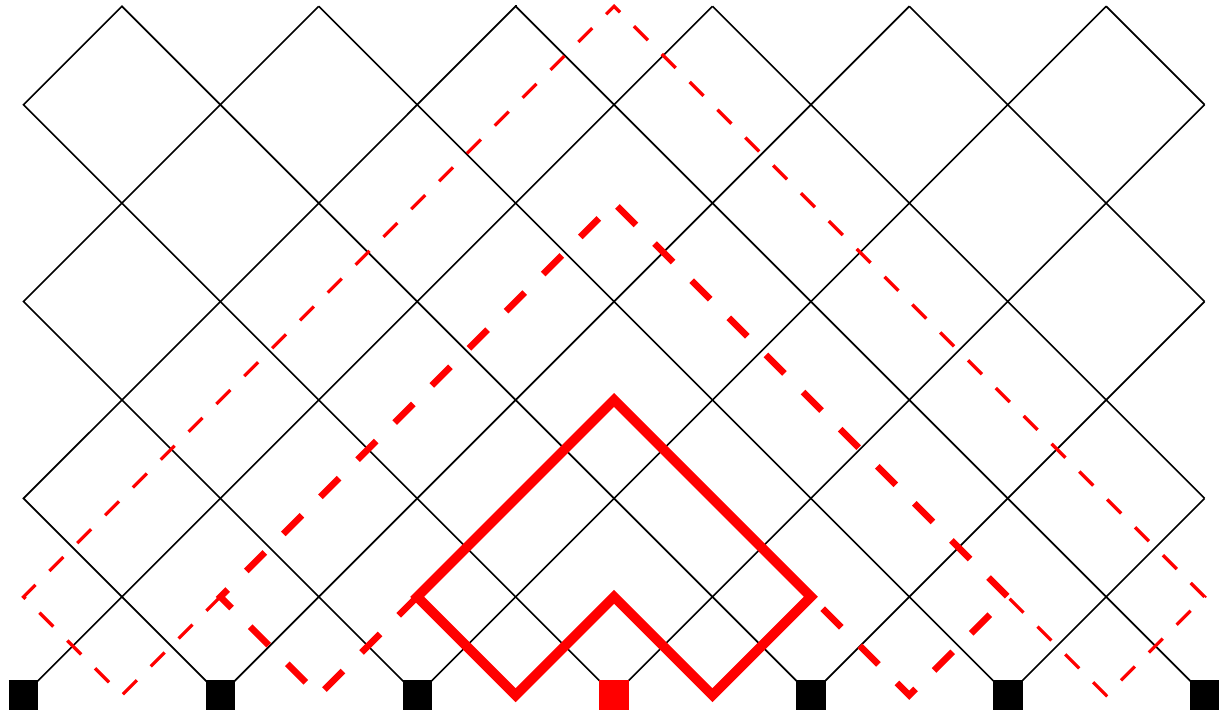}
\caption{Illustration of the model used in Step~1.}
\end{center}
\end{figure}

At time 0, the cluster is $\Stu_0 := S_0$. An edge $e$ is said to be \defini{decisive} if $\textbf{l}(e) \in \Stu_t$  and $\textbf{u}(e) \not \in \Stu_t$. The cluster does not change until a clock on a decisive edge rings. When this event occurs, $\Stu_t$, which was $S_{ n}$ for some random $n$, becomes $S_{ n+1}$. The data of $\Stu_t$ is thus just the data of this random $n(t)$.

Let $(\tau_n)_{n \geq 1}$ be a sequence of independent random variables such that $\tau_n$ follows an exponential law of parameter $2n$. Let $T_n := \sum_{k=1}^{n} \tau_k$. Then, by construction, $(T_n)_{n \in \N}$ has the same law as the sequence of the jumping times of the cluster from one state to another.

\begin{fact}
Almost surely, eventually, $$T_{\lfloor e^{n^2} \rfloor} > \frac{n^2}{8}.$$
\end{fact}
\begin{proof}
Consider $f : n \mapsto \lfloor e^{n^2}\rfloor$. By construction, one has the following estimate:
$$\E\left[\sum_{k = f(n) + 1}^{f(n+1)}~ \tau_k\right] = \sum_{k = f(n) + 1}^{f(n+1)} \frac{1}{2k} \underset{n \to \infty}{\sim} n.$$
Setting $\mathfrak{T}_n := \sum_{k = f(n) + 1}^{f(n+1)}~ \tau_k$, we have
$$
\Var[\Tfr_n] \underset{\text{indep.}}{=} \sum_{k = f(n) + 1}^{f(n+1)} \Var[\tau_k] \leq \frac{1}{4}\times\frac{\pi^2}{6}.
$$
By Chebyshev's inequality and our control on the expectation, for $n$ large enough,
$$
\P\left[\Tfr_n < \frac{n}{2}\right]  \leq \frac{\pi^2}{3n^2}.
$$
By the Borel-Cantelli Lemma, a.s.e.\  $\Tfr_n > \frac{n}{2}$. The result follows.
\end{proof}

Consequently, for some (explicit) $c \in \R_+^\star$, a.s.e.\ $\Stu_t \subset S_{\lfloor e^{ct}\rfloor}$. The area $\Red_t$ is therefore well-defined and is a.s.e.\ a subset of $S_{\lfloor e^{ct}\rfloor}$.

\subsubsection*{Step 2: Polynomial growth of the height}

\begin{lemma}\label{lem:boot}
Let $M$ be a sequence of positive real numbers such that a.s.e., $\Red_n \subset S_{ \lfloor M_n\rfloor}$. Assume that $M_n$ is eventually larger than $n$. Then for some constant $a\in\R_+^\star$, a.s.e.,
$
\hei_n \leq an\ln M_n.
$
\end{lemma}
\begin{proof}
The \defini{colored area} is the set the sites that are red or black. It is dominated by the directed first-passage percolation starting from $D$ and using the same clocks. Let $\Per_t$ be the cluster of this percolation at time $t$.
We know that, a.s.e.\ $\Red_n \subset S_{\lfloor M_n\rfloor} \cap  \Per_n =: \Aux_n^{\exp(cn)}$, where $\Aux_t^r:=S_{\lfloor r\rfloor}\cap\Per_t$.
For $n \in \N$ and $a \in \R_+^\star$,

\begin{small}
\begin{eqnarray*}
\P\left[ \h(\Aux_n^{M_n}) > an\ln M_n \right] & \leq& \P\left[\exists k \leq 2n, \h\left(\Aux_{\frac{k+1}{2}}^{M_n}\right) - \h\left(\Aux_{k/2}^{M_n}\right) > a\ln M_n/2\right]\\
 & \leq& 2n \max_{k \leq 2n} \P\left[\h\left(\Aux_{\frac{k+1}{2}}^{M_n}\right) - \h\left(\Aux_{\frac{k}{2}}^{M_n}\right) > a\ln M_n/2\right]\\
& \leq &2ne^{-\text{cst}\cdot a\ln M_n}(2M_n+1)\\
&\leq&2n (2M_n+1)^{1-\text{cst}\cdot a}.
\end{eqnarray*}
\end{small}

\begin{small}(For the last inequality, see page~\pageref{page:decroissance}.)\end{small}
Since $n=O(M_n)$, taking $a$ large enough implies that the probabilities $\P\left[ \h(\Aux_n^{M_n}) > an\ln M_n \right] $ are summable. Applying the Borel-Cantelli Lemma, we obtain that a.s.e.\ $\hei_n \leq an\ln M_n$.

\end{proof}

Applying Lemma~\ref{lem:boot} to $(e^{cn})$ and increasing slightly the value of $a$, one gets that a.s.e., $\hei(t)\leq at^2$. \begin{small}Indeed, $(n+1)^2\underset{n\to\infty}{\sim}n^2$.\end{small}

\subsubsection*{Step 3: Polynomial lateral growth }

\begin{lemma}
\label{lem:strap}
Let $M$ be a sequence of  real numbers greater than 1 such that a.s.e., $\hei_n \leq M_n$.
Then, for some constant $b\in\R_+^\star$, a.s.e.,
$
\dev_n \leq b\cdot n M_n.
$
\end{lemma}

\begin{notation}
If $k\in\N$, let $H_k := \{P \in \Z^2~:~0 \leq \h(P)\leq k \}$ be the \defini{$k$-strip}.
\end{notation}

\begin{proof}
Given a natural number $k$, we consider the dynamics defined as in Step 1, but with
$$
S_{ n}^k := S_{ n} \cap H_k
$$
instead of $S_{ n}$. We denote by $\Stu_t^k$ the corresponding cluster at time $t$. As long as $\hei_t \leq k$, we have $\Red_t \subset \Stu_t^k$.

Let $\tau_n$ be i.i.d.\ random variables following an exponential law of parameter 1 and let
$$
T_n := \sum_{i=1}^n \tau_i.
$$
The sequence of the jumping times of the $H_k$-dynamics dominates stochastically $(T_n/2k)_n$.

Large deviation theory guarantees that there is some $\text{cst}'$ such that for any $n\in\N$,
$$
\begin{array}{lll}
\P\left[ \textbf{d}(\Stu_n^{M_n}) \geq \lfloor 3nM_n\rfloor \right]
   & \leq \P\left[T_{\lfloor 3nM_n\rfloor}/(2M_n) \leq n\right]\\
{} & \leq \P\left[T_{\lfloor 3nM_n\rfloor} \leq 2nM_n\right]\\
{} & \leq e^{-\text{cst}'\times nM_n}.
\end{array}
$$
The Borel-Cantelli Lemma thus gives: a.s.e., $\textbf{d}(\Stu_n^{M_n}) < \lfloor 3nM_n\rfloor$.
\end{proof}

It results from Lemma~\ref{lem:strap} applied to the estimate of Step~2 that a.s.e., $\dev_t \leq bt^3$.

\subsubsection*{Step 4: Final bounds}

Applying Lemma~\ref{lem:boot} to the polynomial estimates we now have yields the following: a.s.e., $\hei_t \leq \text{cst}''\cdot t\ln t$. Applying Lemma~\ref{lem:strap} to this estimate gives the almost quadratic bound on the width.
\end{proof}

\begin{remark}
The same arguments can be adapted to prove Proposition~\ref{infoh} for any sufficiently horizontal initial cluster. More exactly, it is enough to assume that the initial cluster satisfies the assumption of section~\ref{infvol} for $a<1$. In this case, the constant $c_0$ depends on $a$, the quantity $\hei_t$ stands for the maximal distance from a point of $\Red_t$ to $C$ and $\dev_t$ designates the diameter of $\Red_t$.
\end{remark}

\section{Bounds on the height and width of the cluster}

\label{kestin}

Let us consider the discrete-time dynamics starting from $(0,0)$. In this section, let $\hei_n := \h(C_n)$ and $\dev_n := |\textbf{d}|(C_n)$.
Following~\cite{kes}, we obtain the following bounds:

\begin{proposition}
\label{kesprop}
For some constant $c_1$, almost surely, eventually,
$$
\sqrt{2n} \leq \hei_n \leq c_1n^{2/3}
$$
\center{and}
$$
c_1^{-1}n^{1/3} \leq \dev_n \leq c_1\sqrt{n}.
$$
\end{proposition}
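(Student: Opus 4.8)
The plan is to treat the four inequalities separately, proving the two lower bounds by elementary counting and the two upper bounds by adapting Kesten's method \cite{kes} to the directed walk. For the lower bound on the height I would argue deterministically: the cluster $C_n$ has exactly $n+1$ sites and is contained in $L_{\leq\hei_n}$, with $|L_{\leq\hei_n}|=\binom{\hei_n+2}{2}$. Hence $n+1\leq\frac{(\hei_n+1)(\hei_n+2)}{2}$, which forces $\hei_n\geq\sqrt{2n}\,(1+o(1))$; since the statement is a.s.e., this gives the announced bound. For the lower bound on the width I would combine the same volume idea with the \emph{upper} bound on the height proved below: every site of $C_n$ satisfies $0\leq\h\leq\hei_n$ and $|\textbf{d}|\leq\dev_n$, and the number of such sites is at most $(2\dev_n+1)(\hei_n+1)$. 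Therefore $n+1\leq(2\dev_n+1)(\hei_n+1)$, and feeding in $\hei_n\leq c_1 n^{2/3}$ yields $\dev_n\gtrsim n^{1/3}$, as required.

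The upper bounds are the substance of the proposition and both rest on a per-step harmonic-measure estimate. Write $\mathcal{F}_n$ for the $\sigma$-algebra generated by the first $n$ attachment steps; both $\hei_n$ and $\dev_n$ are nondecreasing with increments in $\{0,1\}$ (attaching $\textbf{u}(e)$ above $\textbf{l}(e)\in C_n$ changes $\h$ by $+1$ and $\textbf{d}$ by $\pm1$). For the height I would prove
$$
\Px{\hei_{n+1}=\hei_n+1\mid\mathcal{F}_n}\leq \frac{C}{\sqrt{\hei_n}}.
$$
The idea is the directed analogue of Beurling's estimate. In the falling-particle description the attachment site is the first point at which a downward directed walk --- conditioned to stick --- meets $C_n$; in space--time coordinates (descent playing the role of time, $\textbf{d}$ the role of space) the horizontal coordinate performs a simple one-dimensional walk. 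Since $C_n$ is connected and reaches height $\hei_n$, a particle raising the height must have its horizontal coordinate return to the column of a topmost site \emph{exactly} at height $\hei_n$: a point-return event over a time scale $\sim\hei_n$, whose conditional probability is $\sim\hei_n^{-1/2}$. Granting this, $(h+1)^{3/2}-h^{3/2}\leq\frac32 h^{1/2}+O(h^{-1/2})$ gives $\E[\hei_{n+1}^{3/2}-\hei_n^{3/2}\mid\mathcal{F}_n]\leq C'$, so $\hei_n^{3/2}-C'n$ is a supermartingale and $\E[\hei_n^{3/2}]\leq C'n+O(1)$. A moment bound together with the Borel--Cantelli lemma along $n=2^k$ and the monotonicity of $\hei_n$ then upgrades this to $\hei_n\leq c_1 n^{2/3}$ a.s.e.

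For the width the scheme is identical but the key estimate is sharper,
$$
\Px{\dev_{n+1}=\dev_n+1\mid\mathcal{F}_n}\leq\frac{C}{\dev_n},
$$
and it is this stronger power that produces the exponent $1/2$ rather than $2/3$. Here the extremal (say rightmost) site sits at $\textbf{d}=\dev_n$, and to advance the width the one-dimensional horizontal walk must attain the near-record value $\dev_n$ while avoiding the bulk of $C_n$, which occupies the smaller values of $\textbf{d}$; a gambler's-ruin estimate on $[0,\dev_n]$ bounds this by $\sim\dev_n^{-1}$. Since $(\dev+1)^2-\dev^2=2\dev+1$, this yields $\E[\dev_{n+1}^2-\dev_n^2\mid\mathcal{F}_n]\leq C''$, so $\dev_n^2-C''n$ is a supermartingale and the same Borel--Cantelli-plus-monotonicity argument gives $\dev_n\leq c_1\sqrt{n}$ a.s.e.

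The main obstacle is the rigorous proof of the two conditional estimates. The delicate points are the conditioning on the walk actually sticking to the cluster, the fact that the extremal level may contain many sites rather than a single tip (so one must control the harmonic measure of the whole top level, not of one point), and --- most importantly --- obtaining the \emph{correct powers} $\hei_n^{-1/2}$ versus $\dev_n^{-1}$, which reflect the difference between a point-return and a level-reaching event for the one-dimensional horizontal walk. Kesten's original argument handles the analogous two-dimensional point-return estimate; the adaptation here is precisely to exploit the one-dimensional (space--time) nature of the directed walk, which is also what makes the width bound strictly better than the height bound. A secondary, more routine difficulty is making the supermartingale and Borel--Cantelli passage from moment bounds to a.s.e.\ statements, using only that $\hei_n$ and $\dev_n$ are nondecreasing.
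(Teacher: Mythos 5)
Your overall architecture --- lower bounds by volume counting, upper bounds via a Kesten-type argument whose two exponents come from the activity powers $\hei_n^{1/2}$ versus $\dev_n$ --- matches the paper's, and your counting arguments for the two lower bounds are exactly what the paper intends. The gap is in the reduction of the upper bounds to the per-step estimates $\Px{\hei_{n+1}=\hei_n+1\mid\mathcal F_n}\leq C\hei_n^{-1/2}$ and $\Px{\dev_{n+1}=\dev_n+1\mid\mathcal F_n}\leq C\dev_n^{-1}$: these conditional estimates are false for some reachable configurations, so the supermartingales $\hei_n^{3/2}-C'n$ and $\dev_n^2-C''n$ do not exist. The conditional probability that the height increases equals the \emph{total} activity of the sites at height $\hei_n+1$ divided by the total activity of the cluster, and the numerator is of order the number of cluster sites on the top line, not $O(1)$. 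Concretely, the full triangle $L_{\leq k}$ is an animal; its total activity is $2(k+1)$ and all of it is carried by sites at height $k+1$, so from that configuration the height increases at the very next step with conditional probability $1$ rather than $O(k^{-1/2})$. (The width estimate fails similarly when many cluster sites realize the extremal deviation.) You flag the ``many tips'' issue yourself as a delicate point, but it is not a technicality: it kills the one-step formulation. What the activity bound of the paper's Lemma~\ref{controll} actually yields is a bound on the conditional probability that one \emph{specified} site is the next to be added, namely $1/\act(C_n)\leq 1/(c\,\f(n)^{\alpha})$. Accordingly the paper (following Kesten) does not argue step by step: it observes that an increase of $\f$ by $m$ during a dyadic window $[l,2^{k+1}]$ forces some upward directed path of length $m$ to be ``filled in order'', takes a union bound over the $O(l^2 2^m)$ such paths, and applies Freedman's exponential martingale inequality to the indicators of the successive fillings of a fixed path, each of which has conditional probability at most $1/(c\,\f(l)^{\alpha})$. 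A discrete Gronwall lemma then converts the resulting recursive inequality into $\f(n)\leq c_1 n^{1/(\alpha+1)}$, i.e.\ the exponents $2/3$ and $1/2$.

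A second, independent problem is your concluding step. Even granting the per-step estimates, the supermartingale argument only gives $\E[\hei_n^{3/2}]\leq C'n+O(1)$, whence $\Px{\hei_{2^k}>K2^{2k/3}}\leq C'K^{-3/2}$ by Markov's inequality; this bound does not tend to $0$ as $k\to\infty$ for fixed $K$, so it is not summable and Borel--Cantelli cannot be applied, with or without monotonicity. An almost-sure eventual bound requires probabilities that are summable along the dyadic scales, i.e.\ some exponential concentration for the increments --- which is precisely what the path-counting combined with Freedman's inequality supplies in the paper, producing bounds of the form $(2^{k+1}+2)^2(e/4)^{2^{k/2}}$.
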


\begin{remark}
For DLA, Kesten has proved that the radius of the cluster is almost surely eventually lower than $c_1n^{2/3}$.
\end{remark}
\begin{proof}
Before applying Kesten's argument, we need some lower bound on the activity of the cluster. This is natural since a high activity of the cluster guarantees, for all $P \in \Z^2$, a low probability that this site will be the next to be added to the cluster (lower than $1/\act(\text{cluster})$). This allows us to control the probability that a path of length $l$ is added between times $n_0$ and $n_1$ and thus the probability that the height (or the width) of the cluster is increased by $l$ between $n_0$ and $n_1$.

Notice that the lower bounds are consequences of the upper bounds and the fact that $C_n$ contains $n+1$ particles.

\begin{definition}
An \defini{animal} is a non-empty finite set that can be obtained by a DDLA starting from $(0,0)$.
\end{definition}

\begin{lemma}
\label{controll}
There is a constant $c$ such that
$$
\forall F \Subset \Ndeux,~ F\text{ is an animal} \implies  \textbf{\emph{act}}(F) \geq c\max(|\textbf{\emph{d}}|(F), \sqrt{\textbf{\emph{h}}(F)}).
$$
\end{lemma}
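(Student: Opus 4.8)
The plan is to rewrite the activity as a sum of escape probabilities, to recognise that sum (via time-reversal) as the capacity of $F$ seen from above, and then to bound this capacity separately from below by $c\,|\ddd|(F)$ and by $c\sqrt{\h(F)}$. Write $H:=\h(F)$ and $D:=|\ddd|(F)$, and for $P\in\Z^2$ let $q(P):=\P[\forall n,\ P+W_n\notin F]$ be the escape probability appearing in the definition of $\act$, so that $\act(F)=\sum_P q(P)\cdot\#\{e\in\textbf{E}:\lll(e)\in F,\ \uuu(e)=P\}$. Two observations drive everything. First, since $W_0=(0,0)$ any $P\in F$ has $q(P)=0$; and since an upward walk strictly increases the height and stays in $\Ndeux$, the value $q(P)$ depends only on $F\cap\{\h>\h(P)\}$ and equals $1$ as soon as $\h(P)\geq H$. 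Second, each site $P$ lying just above $F$ (i.e. with a lower-neighbour in $F$) carries at least one incoming edge, so $\act(F)\geq\sum_{P\in\partial^+F}q(P)$, where $\partial^+F$ denotes this upper boundary. It then suffices to prove $\act(F)\geq cD$ and $\act(F)\geq c\sqrt H$ and keep the smaller constant.

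The backbone is a time-reversal. A downward path and its reversed upward path carry the same weight $2^{-\ell}$, and an upward walk started in $\Ndeux$ reaches the line $L_{H+1}$ before it can ever escape, at which moment it is safe. Decomposing $q(P)$ over the first-passage point $Z\in L_{H+1}$ and reversing each path gives $q(P)=\sum_{Z\in L_{H+1}}\P[\text{a downward walk from } Z \text{ reaches } P \text{ at height } \h(P)\text{ while avoiding } F]$; here downward walks have non-increasing coordinates, so those that hit $F\subseteq\Ndeux$ never leave $\Ndeux$ beforehand, matching the reversed walks exactly. Summing over $P\in\partial^+F$ and using that the step before a downward walk enters $F$ always lands on $\partial^+F$, I obtain
\[
\act(F)\ \geq\ \sum_{Z\in L_{H+1}}\P[\text{the downward walk from } Z \text{ hits } F]\ =:\ \mathrm{Cap}(F),
\]
the capacity of $F$ seen from far above. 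Since $F$ is an animal, it contains, for any target point, a directed path $\gamma=(\gamma_0,\dots,\gamma_{h^+})$ from $(0,0)$ to that point with $\h(\gamma_i)=i$; as $\mathrm{Cap}$ is monotone, $\act(F)\geq\mathrm{Cap}(\gamma)$, and $\gamma$ meets each height exactly once, which turns hitting it into a one-dimensional problem.

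To run that one-dimensional problem, fix $\gamma$ and write $\delta_i:=\ddd(\gamma_i)$ for its deviation profile ($|\delta_{i+1}-\delta_i|=1$). For a downward walk from $Z\in L_{H+1}$ let $X_i$ be its deviation at height $i$; then $X$ performs a symmetric $\pm1$ walk in the height variable, and the walk hits $\gamma$ as soon as $Y_i:=X_i-\delta_i$ vanishes for some $0\leq i\leq h^+$. For the height bound I take $\gamma$ ending at a point of height $H$: over the full descent $X$ fluctuates by order $\sqrt H$, so for each of $\gtrsim\sqrt H$ admissible starting deviations $\ddd(Z)$ the process $Y$ hits $0$ before height $0$ with probability bounded below, whence $\mathrm{Cap}(\gamma)\geq c\sqrt H$. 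For the width bound I take $\gamma$ ending at a point realising $|\ddd|(F)=D$ (using the reflection $\ddd\mapsto-\ddd$ to assume it lies on the right); along $\gamma$ the deviation climbs by a net $D$, which gives $Y$ an upward drift of order $D$ as it descends, opening a window of $\gtrsim D$ starting deviations for which $Y$ is forced across $0$ with probability bounded below, whence $\mathrm{Cap}(\gamma)\geq cD$.

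The main obstacle is precisely these two hitting estimates. The process $Y$ is \emph{not} a simple random walk: its increments lie in $\{-2,0,+2\}$ with per-step laws dictated by the fixed (and essentially adversarial) profile $\delta$, and the constraint that both $\gamma$ and the walk live in $\Ndeux$ creates boundary effects near the edges of the quadrant. Extracting rigorously the linearly-many ($\gtrsim D$) and diffusively-many ($\gtrsim\sqrt H$) good starting columns is the Kesten-type heart of the argument, and I expect to treat it either by coupling $Y$ over blocks of consecutive heights to a genuine random walk and applying a gambler's-ruin / local-CLT estimate, or by a second-moment (Paley--Zygmund) computation on the number of heights at which the descending walk meets $\gamma$. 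The remaining ingredients --- the reduction to $\partial^+F$, the reversal identity, and the monotonicity $\mathrm{Cap}(F)\geq\mathrm{Cap}(\gamma)$ --- are routine once the height-monotonicity of upward walks has been noted.
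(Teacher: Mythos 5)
Your reduction --- rewriting the activity via time reversal as a capacity seen from the top line, restricting by monotonicity to a directed path $\gamma$ from $(0,0)$ to an extremal point of $F$, and turning the problem into a one-dimensional question for $Y_i:=X_i-\delta_i$ --- is exactly the paper's strategy (the paper states the reversal as the identity $\act(F)=\sum_{P\in L_{\h(F)}}2\,\P[\exists k,\ P+W_k\in F]$ and passes to a sub-animal containing the extremal point, but these are the same moves). The genuine gap is that you stop at the two hitting estimates, declare them the ``Kesten-type heart of the argument'', and propose to attack them by block-coupling or a second-moment computation on the inhomogeneous process $Y$. Neither route is carried out, and it is not clear either would close cleanly, precisely because of the adversarial profile $\delta$ you yourself point out.

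The missing idea is that you never need to control \emph{when} $Y$ hits $0$: it suffices to observe that $Y$ starts on one side of $0$ and to bound below the probability that it \emph{ends} strictly on the other side, and the endpoint of $Y$ is not adversarial at all. Indeed $\delta$ at height $0$ equals $\ddd((0,0))=0$ deterministically, so the terminal value of $Y$ is just $\ddd(Z)+\tilde W_{H}$, the deviation of the unconstrained walk after its full descent. Since all sites at a given height have deviations of the same parity, $Y$ moves by $0$ or $\pm2$ per height step; hence if $Y\leq 0$ at the top and $Y>0$ (i.e.\ $Y\geq 2$) at the bottom, it must pass through $0$, which means the walk meets $\gamma$. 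For the width bound take $\ddd(Z)\in[0,D]$, so $Y$ starts at $\ddd(Z)-D\leq 0$ and $\P[\ddd(Z)+\tilde W_{H}>0]\geq c$ by symmetry; for the height bound take $\ddd(Z)\in[-\sqrt{H},0]$, so $Y$ starts nonpositive (the top of $\gamma$ having nonnegative deviation after reflection) and $\P[\ddd(Z)+\tilde W_{H}>0]\geq\P[\tilde W_{H}>\sqrt{H}]\geq c$ by the central limit theorem. This yields a constant contribution from each of $\gtrsim D$ (resp.\ $\gtrsim\sqrt{H}$) starting columns and is exactly how the paper concludes; with this observation the step you flag as the main obstacle disappears.
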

\begin{proof}
First of all, we notice that
\begin{equation}
\act(F) = \sum_{P \in L_{\h(F)}} 2\P[\exists k \in \N, P + W_k \in F]\text{,} 
\end{equation}
where $(W_k)$ is a downward directed symmetric random walk.
This is a consequence of the equivalence between the two constructions of DDLA in discrete time.

We will prove that there exists $c\in \R_+^\star$ such that for every animal $F$ and every $Q\in L_{\h(F)}$,
$$\textbf{d}(Q) \in \left[- \sqrt{\h(F)},0\right] \implies \P[\exists k \in \N, Q + W_k \in F] > c.$$ Together with the first formula of the proof, this will imply that
$$
\textbf{act}(F) \geq c\sqrt{\textbf{h}(F)}.
$$
Let $F$ be an animal and $P\in F$ be such that $\h(P) = \h(F)$. By symmetry, we can assume that $\textbf{d}(P) \geq 0$. 
Since $F$ is an animal, for all $Q\in L_{\h(F)}$ such that $\textbf{d}(Q) < \textbf{d}(P)$, we have the following inequality:
$$
\P[\exists k \in \N, Q + W_k \in F] \geq \P[\textbf{d}(Q+W_{\h(F)}) > 0].
$$
Besides, if $\textbf{d}(Q) > -\sqrt{\h(F)}$, then
$$
\P[\textbf{d}(Q+W_{\h(F)}) > 0] \geq \P\left[\tilde W_{\h(F)} >\sqrt{\h(F)}\right],
$$
where $(\tilde W_k)_k$ is the symmetric $1$-dimensional random walk. The quantity in the right-hand side of this inequality is bounded from below by the Central Limit Theorem, implying half of the desired inequality.

Let $P\in F$ be such that $|\textbf{d}(P)| = |\textbf{d}|(F)$. Since $F$ is an animal, there exists $A \subset F$ such that $A$ is an animal, $\h(P) = \h(A)$ and $P\in A$.
It follows from ($\star$) that
$$
F \subset F' \implies \act(F) \leq \act(F').
$$
Thus, we just need to prove the result for $A$.

By symmetry, we can assume that $\textbf{d}(P) > 0$.
If $Q \in L_{\h(A)}$ and $\textbf{d}(Q) \in [0,\textbf{d}(P)]$, 
$$\P[\exists k, Q+W_k \in A] \geq \P[\textbf{d}(Q + W_{\h(A)}) > 0] \geq \frac{1}2.$$
This ends the proof of the lemma.
\end{proof}

Let $(\f, \alpha)$ be $\left(n \mapsto\h(A_n), \frac{1}2\right)$ or $\left(n \mapsto|\textbf{d}|(A_n), 1\right)$. We will prove that there exists almost surely $k_0$ such that
$$
\forall k > k_0, \forall l, 2^k \leq l \leq 2^{k+1} \implies \f(2^{k+1}) - \f(l) \leq \frac{2^{k+3}}{c\f(l)^{\alpha}} + 2^{k/2}.
$$
We then conclude using the following lemma.

\begin{lemma}[discrete version of Gronwall's Lemma]
\label{gron}
Let $\alpha \in (0,1]$, $c \in \R_+^\star$ and $(a_n)_{n\in \N} \in \N^{\N}$. Assume that $\forall n, a_{n+1}-a_n \in [0,1]$ and that there exists $k_0$ such that
$$
\forall k > k_0,~ \forall l, 2^k \leq l \leq 2^{k+1} \implies \frac{a_{2^{k+1}} - a_l}{2^k} \leq \frac{8}{c\cdot a_l^{\alpha}} + 2^{-k/2}.
$$
Then, there exists some $c_1$ depending only on $(\alpha, c)$ such that, eventually, $$a_n \leq c_1 n^{1/(\alpha + 1)}.$$
\end{lemma}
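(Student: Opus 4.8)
The inequality is the discrete, dyadic-time avatar of the differential inequality $a'\lesssim a^{-\alpha}$: reading $2^k$ as the natural amount of ``time'' that elapses across the block $[2^k,2^{k+1}]$, the hypothesis encodes $\frac{d}{dt}a\lesssim a^{-\alpha}$, whose integral $a^{\alpha+1}\lesssim t$ produces exactly the exponent $1/(\alpha+1)$. My plan is to make this integration rigorous by working with the \emph{hitting times} of the sequence rather than with $a_n$ directly. Since $a_{n+1}-a_n\in\{0,1\}$ and $a_n\in\N$, the sequence is nondecreasing and takes every integer value between its first term and its supremum; for each integer $m$ set $\sigma(m):=\min\{n:a_n\ge m\}$. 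The target $a_n\le c_1 n^{1/(\alpha+1)}$ (eventually) is equivalent to $\sigma(m)\ge c_1^{-(\alpha+1)}m^{\alpha+1}$ (eventually), so it suffices to prove $\sigma(2m)\ge c'm^{\alpha+1}$ for all large $m$, with $c'=c'(\alpha,c)$. If the sequence is bounded it is eventually constant and the conclusion is trivial, so I may assume $a_n\to\infty$ and that every $\sigma(m)$ is finite.

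Fix a large $m$ and set $T:=\sigma(m)$, $T':=\sigma(2m)$, so that $a_n\ge m$ for all $n\ge T$ and the value increases by exactly $m$ between $T$ and $T'$. I cover $[T,T']$ by the dyadic blocks $[2^k,2^{k+1}]$ for $k_1\le k\le k_2$, where $2^{k_1}\le T<2^{k_1+1}$ and $2^{k_2}\le T'<2^{k_2+1}$. For every block with left endpoint $2^k\ge T$ one has $a_{2^k}\ge m$, so the hypothesis applied at $l=2^k$ bounds the increase over that block by $\frac{8\cdot 2^k}{c\,m^{\alpha}}+2^{k/2}$; the leftmost partial block is treated identically by applying the hypothesis at $l=T$, where $a_T=m$. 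Telescoping and summing the resulting geometric series, the total increase $m=a_{T'}-a_T$ is at most $\frac{24}{c\,m^{\alpha}}2^{k_2}+C\,2^{k_2/2}$ for an absolute constant $C$. Rearranging yields $m^{\alpha+1}\le \frac{24}{c}2^{k_2}+C\,m^{\alpha}2^{k_2/2}$, from which $2^{k_2}\ge c'm^{\alpha+1}$ and hence $\sigma(2m)=T'\ge 2^{k_2}\ge c'm^{\alpha+1}$.

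The place where care is required—and what I expect to be the main obstacle—is the correction term $2^{-k/2}$ in the hypothesis (the $2^{k/2}$ above): a priori it could dominate and destroy the estimate. It is controlled \emph{precisely because} $\alpha\le1$, hence $\alpha+1\le2$: in the dichotomy coming from $m^{\alpha+1}\le \frac{24}{c}2^{k_2}+C\,m^{\alpha}2^{k_2/2}$, the alternative $C\,m^{\alpha}2^{k_2/2}\ge\tfrac12 m^{\alpha+1}$ forces $2^{k_2}\gtrsim m^{2}\ge m^{\alpha+1}$, so in either case $2^{k_2}\gtrsim m^{\alpha+1}$. The remaining work is bookkeeping: verifying that the two boundary blocks contribute only lower-order terms, and that for $m$ large all blocks involved satisfy $k>k_0$ (which holds since $T=\sigma(m)\to\infty$). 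Finally, monotonicity of $\sigma$ promotes $\sigma(2m)\ge c'm^{\alpha+1}$ from even arguments to all large integers, and translating back gives $a_n\le c_1 n^{1/(\alpha+1)}$ eventually with $c_1=(1/c')^{1/(\alpha+1)}$ depending only on $(\alpha,c)$. I anticipate that the genuinely delicate point is organizing this so that no inductive base case is needed: working \emph{between consecutive value-doublings} makes each estimate self-contained and sidesteps the difficulty that the naive one-step bound $a_{2^{k+1}}-a_{2^k}\le\frac{8\cdot 2^k}{c\,a_{2^k}^{\alpha}}+2^{k/2}$ degenerates exactly when $a_{2^k}$ is small.
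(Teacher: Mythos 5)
Your proof is correct, but it takes a genuinely different route from the paper's. The paper works forwards in time: it fixes a threshold $d_0\cdot 2^{k/(\alpha+1)}$, shows that once $a_{2^k}$ exceeds it the block increment $a_{2^{k+1}}-a_{2^k}$ is at most $(\tfrac{8}{cd_0^\alpha}+1)2^{k/(\alpha+1)}$ (this is where it, too, needs $\alpha\le 1$, to absorb $2^{k/2}\le 2^{k/(\alpha+1)}$), and then runs a case analysis over dyadic scales, locating the minimal index $l$ at which the sequence crosses the threshold inside a block and using $a_l\le a_{l-1}+1$ to conclude $a_{2^k}\le 2d_0\cdot 2^{k/(\alpha+1)}+1$ for all large $k$. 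You instead invert the sequence into hitting times $\sigma(m)$ and integrate the inequality between consecutive value-doublings, getting $\sigma(2m)\ge c'\,m^{\alpha+1}$ directly from one telescoping sum plus the dichotomy $\tfrac{24}{c}2^{k_2}\ge\tfrac12 m^{\alpha+1}$ or $C m^{\alpha}2^{k_2/2}\ge\tfrac12 m^{\alpha+1}$ (the latter forcing $2^{k_2}\gtrsim m^2\ge m^{\alpha+1}$ since $\alpha\le1$). Your version avoids the paper's choice of $d_0$, the bootstrapping induction, and the threshold-crossing bookkeeping, at the price of using the integrality of $(a_n)$ and the unit increments a bit more heavily (to guarantee $a_{\sigma(m)}=m$ for $m>a_0$); both ingredients are supplied by the hypotheses, so nothing is lost. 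Two cosmetic points: your remark that the boundary blocks are ``lower-order'' is not quite right --- the rightmost block is in fact the dominant contribution --- but your displayed bound $m\le\tfrac{24}{c m^{\alpha}}2^{k_2}+C\,2^{k_2/2}$ already accounts for it correctly; and you should note explicitly that the degenerate case $k_2=k_1$ (both hitting times in one dyadic block) is handled by the single estimate at $l=T$, which your dichotomy covers unchanged.
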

Its proof is postponed to the end of the section.

Let $(k,l)$ be such that $2^k \leq l \leq 2^{k+1}$ and let us set $m := \lfloor\frac{2^{k+3}}{c\cdot \f(l)^{\alpha}} + 2^{k/2}\rfloor$.
We are looking for an upper bound on $\P[\f(2^{k+1}) - \f(l) > m]$ in order to apply the Borel-Cantelli Lemma.

\begin{definition}
The path $(P_1,\dots,P_n) \in (\Ndeux)^n$ is said to be \defini{filled in order} if
\end{definition}\begin{itemize}
\item it is an upward directed path: $\forall i,~ P_{i+1}-P_i \in \{(0,1),(1,0)\}$;

\item all the $P_i$ belong to the considered cluster;

\item if $i < j$, $P_i$ is added to the cluster before $P_j$.
\end{itemize}
Assume that $\f(2^{k+1}) - \f(l) > m$. Let $P$ be such that $$\f(P) = \max_ {Q \in C_{2^{k+1}}} \f(Q).$$ By construction of DDLA, there exists a path filled in order linking 0 to $P$. Taking its $r$ last steps for a suitable value of $r$, we obtain a path $\textbf{P} = (P_1,\dots, P_r)$ that is filled in order (relative to $C_{2^{k+1}}$) and  such that $P_r = P$ and $\f(P_r) - \f(P_1) =m$. In particular, there exists a path of length $m$ --- $(P_1,\dots, P_m)$ --- filled in order such that its sites are added to the cluster between times $l$ and $2^{k+1}$.

The number of upward directed paths of length $m$ starting in $L_{\leq l}$ is $$\frac{(l+1)(l+2)}{2}\cdot 2^m.$$ We now need to control, for such a path $\textbf{P} = (P_1, \dots, P_m)$, the probability that it is filled in order between times $l$ and $2^{k+1}$. More precisely, we extend $\textbf{P}$ to an infinite upward directed path and look for an upper bound on the probability that its first $m$ sites are successfully added between times $l$ and $2^{k+1}$. For $n \in [l+1 , 2^{k+1}]$, assume that $i = \min\{j \in \mathbb{N} : P_j\not \in C_{n-1} \}$. Let $I_n$ be the event that $P_i$ is the site added at time $n$.
The probability we want to control is lower than $\P\left[\sum_{n=l+1}^{2^{k+1}} I_n \geq m \right]$.

We know, by Lemma~\ref{controll}, that $\P[I_n|C_{n-1}] \leq \frac{1}{c\cdot \f(n-1)^{\alpha}}$. By monotonicity of $\f$, this implies that, almost surely, 
$$
\sum_{n = l+1}^{2^{k+1}} \P[I_n|C_{n-1}] \leq \frac{2^k}{c\cdot\f(l)^{\alpha}}.
$$
We now use the following exponential bound:
\begin{theorem}[Theorem 4.b in \cite{fre}]
Let $(\mathcal{F}_n)$ be a filtration. Let $\tau$ be an $(\mathcal{F}_n)$-stopping time. Let $(X_n)$ be a sequence of random variables such that
$$\text{for every }n,~ X_n \in [0,1]\text{    and    }X_n\text{ is }\mathcal{F}_n\text{-measurable}.$$ 
Let $M_n := \E[X_n|\mathcal{F}_{n-1}]$. Let $(a,b)$ be such that $0 < b\leq a$. Then,
$$
\P\left[\sum_{n=1}^{\tau} X_n \geq a\text{ and }\sum_{n=1}^{\tau}M_n \leq b\right] \leq \left(\frac{b}a\right)^a e^{a-b}.
$$
\end{theorem}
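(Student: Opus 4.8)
The plan is to prove this via Freedman's exponential-supermartingale method. Fix a parameter $\lambda>0$, to be optimized only at the very end. The first step is an elementary convexity estimate: because $x\mapsto e^{\lambda x}$ is convex, on $[0,1]$ it lies below the chord joining $(0,1)$ to $(1,e^\lambda)$, so $e^{\lambda x}\leq 1+(e^\lambda-1)x$ for all $x\in[0,1]$. Applying this to $X_n\in[0,1]$, taking conditional expectations, and using $1+t\leq e^t$ together with the $\mathcal{F}_{n-1}$-measurability of $M_n$, I would obtain
$$
\E[e^{\lambda X_n}\mid\mathcal{F}_{n-1}]\leq 1+(e^\lambda-1)M_n\leq \exp\left((e^\lambda-1)M_n\right).
$$

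Next I would introduce the process
$$
Z_n:=\exp\left(\lambda\sum_{k=1}^n X_k-(e^\lambda-1)\sum_{k=1}^n M_k\right),\qquad Z_0:=1,
$$
and check from the conditional estimate that $\E[Z_n\mid\mathcal{F}_{n-1}]\leq Z_{n-1}$, so that $(Z_n)$ is a nonnegative supermartingale started at $1$. Optional stopping at the bounded time $\tau\wedge n$ gives $\E[Z_{\tau\wedge n}]\leq 1$; since $X_k\geq 0$ and $M_k\geq 0$ make the two partial sums nondecreasing, $Z_{\tau\wedge n}$ converges as $n\to\infty$ to a well-defined $Z_\tau$ (with the convention that the sums run to $\infty$ when $\tau=\infty$), and Fatou's Lemma upgrades the bound to $\E[Z_\tau]\leq 1$.

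To finish, write $A:=\{\sum_{k=1}^\tau X_k\geq a\text{ and }\sum_{k=1}^\tau M_k\leq b\}$. On $A$ the definition of $Z_\tau$ gives directly $Z_\tau\geq \exp(\lambda a-(e^\lambda-1)b)$, so Markov's inequality combined with $\E[Z_\tau]\leq 1$ yields $\P[A]\leq \exp(-\lambda a+(e^\lambda-1)b)$. Optimizing the exponent over $\lambda>0$, one finds the minimum at $e^\lambda=a/b$, that is $\lambda=\ln(a/b)$, which is nonnegative precisely because of the hypothesis $b\leq a$; substituting this value collapses the right-hand side to exactly $\left(\frac{b}{a}\right)^a e^{a-b}$.

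The only genuinely delicate points I anticipate are the sharp convexity inequality --- it must be used for $\lambda>0$ and $x\in[0,1]$, and it is precisely this step that produces the $(e^\lambda-1)$ factor, hence the Poisson-type tail, rather than a coarser Hoeffding bound --- and the passage from $\tau\wedge n$ to the possibly infinite $\tau$. The second point is the one requiring care, but it is disarmed by the monotonicity of $\sum X_k$ and $\sum M_k$, which guarantees that the limiting sums exist in $[0,\infty]$ and that Fatou's Lemma applies with no extra integrability hypothesis. Everything else reduces to the one-variable optimization above.
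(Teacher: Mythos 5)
Your proof is correct. Note that the paper itself does not prove this statement --- it is imported verbatim as Theorem 4.b of Freedman's 1973 paper --- so the only meaningful comparison is with Freedman's original argument, which is the same exponential-supermartingale method you use: the chord bound $e^{\lambda x}\leq 1+(e^\lambda-1)x$ on $[0,1]$, the supermartingale $\exp\bigl(\lambda\sum X_k-(e^\lambda-1)\sum M_k\bigr)$, optional stopping plus Fatou, and optimization at $e^\lambda=a/b$. All the delicate points you flag are handled adequately (the passage to possibly infinite $\tau$ works because $\sum M_k\leq b<\infty$ on the event in question, so no $\infty-\infty$ ambiguity arises there, and the case $a=b$ gives $\lambda=0$ with a trivially true bound of $1$).
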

Applying this to $I_n$ with $\mathcal{F}_n := \sigma(C_0,\dots,C_n)$, $a := m$, $b:=\frac{2^k}{c\cdot\f(l)^{\alpha}}\leq \frac{m}{8}$ and a constant stopping time, we obtain
that the probability that there are at least $m$ successful fillings through \textbf{P} between times $l$ and $2^{k+1}$ 
is lower than $\left(\frac{e}{8}\right)^m$. 

Thus,
$$
\begin{array}{ll}
\P[\f(2^{k+1})- \f(l) > m]&\leq \frac{(l+1)(l+2)}{2}\cdot 2^m\cdot \left(\frac{e}{8}\right)^m\\
{}&\leq  (2^{k+1}+2)^2\cdot\left(\frac{e}{4}\right)^{2^{k/2}}.
\end{array}
$$
Since $\sum_{k\geq 1} \sum_{l=2^k+1}^{2^{k+1}} (2^{k+1}+2)^2\cdot\left(\frac{e}{4}\right)^{2^{k/2}} < \infty $, by the Borel-Cantelli Lemma and Lemma~\ref{gron}, the proposition is established.
\end{proof}
\newline
\vspace{0.2 cm}

\begin{proog}
Take $d_0$ such that $(2^{1/(\alpha +1)} - 1)\cdot d_0 > \frac{8}{cd_0^{\alpha}}+1$ and take $c_1 > 2^{1+1/(\alpha+1)}d_0$. For $k > k_0,$
$$
\begin{array}{ll}
a_{2^k} \geq d_0\cdot 2^{k/(\alpha +1)} & \implies a_{2^{k+1}} - a_{2^k} \leq \frac{8}c. \frac{2^k}{d_0^{\alpha}\cdot2^{k\alpha/(\alpha +1)}} + 2^{k/2}\\
{}&\implies a_{2^{k+1}} - a_{2^k} \leq (\frac{8}{cd_0^{\alpha}}+1)\cdot 2^{k/(\alpha+1)}\\
{}&\implies a_{2^{k+1}} - a_{2^k} \leq d_0\cdot 2^{(k+1)/(\alpha+1)} - d_0\cdot 2^{k/(\alpha + 1)},
\end{array}
$$
where the last line results from the choice of $d_0$. Thus, there exists $k_1 > k_0$ such that $a_{2^{k_1}} \leq 2d_0\cdot 2^{k_1/(\alpha+1)}$. 

If $\forall k \geq k_1,~ a_{2^k} > d_0\cdot2^{k/(\alpha+1)}$, then the implication we have just proved shows that
$$
\forall k > k_1, a_{2^k}-a_{2^{k_1}}\leq d_0\cdot2^{k/(\alpha +1)} - d_0\cdot2^{k_1/(\alpha+1)},
$$ 
which implies that $\forall k \geq k_1,~ a_{2^k} \leq 2d_0\cdot2^{k/(\alpha+1)}$. Since $(a_n)_{n\in \N}$ is a non-decreasing sequence, we obtain
$$
\forall m > 2^{k_1},~ a_m \leq 2^{1+1/(\alpha+1)}\cdot d_0\cdot m^{1/(\alpha +1)}.
$$
Thus, we can assume that $k_1$ is such that $a_{2^{k_1}} \leq d_0\cdot 2^{k_1/(\alpha+1)}$. Assume that there exists $k_2 > k_1$ such that  $a_{2^{k_2}} > d_0\cdot 2^{k_2/(\alpha+1)}$. Take a minimal such $k_2$. By minimality, there exists some minimal $l$ between $2^{k_2-1}+1$ and $2^{k_2}$ such that $a_{l-1} \leq d_0\cdot (l-1)^{1/(\alpha+1)}$ and $a_l > d_0\cdot  l^{1/(\alpha+1)}$. Thus,
$$
a_{2^{k_2}} - a_l \leq \frac{8}c. \frac{2^{k_2-1}}{d_0^{\alpha}\cdot l^{\alpha/(\alpha +1)}} + 2^{(k_2-1)/2}
$$
and, since $a_{l}\leq a_{l-1}+ 1$,
$$
\begin{array}{ll}
a_{2^{k_2}} &\leq d_0\cdot (l-1)^{1/(\alpha+1)} + 1 + \frac{8}c. \frac{2^{k_2-1}}{d_0^{\alpha}\cdot l^{\alpha/(\alpha +1)}} + 2^{(k_2-1)/2}\\
{}&\leq 2d_0\cdot 2^{k_2/(\alpha+1)}+1.
\end{array}
$$
In fact, we have proved that, for $k \geq k_1$,
$$
a_{2^k}\leq\ d_0\cdot 2^{k/(\alpha+1)}\implies a_{2^{k+1}} \leq 2d_0\cdot 2^{(k+1)/(\alpha+1)}+1
$$
\center{and}
$$
\begin{array}{ll}
a_{2^k} > d_0\cdot 2^{k/(\alpha+1)} & \implies a_{2^k} \leq 2d_0\cdot 2^{k/(\alpha+1)}+1\\
{}&\implies a_{2^{k+1}} \leq 2d_0\cdot 2^{(k+1)/(\alpha+1)}+1.
\end{array}
$$
This implies the proposition.
\end{proog}

We can deduce from this a version of Proposition~\ref{kesprop} for the continuous-time model. Of course, we set $\hei_t := \h(C_t)$ and $\dev_t := |\textbf{d}|(C_t)$.

\begin{fact}
\label{kestt}
For some constant $d_1$, almost surely, for every positive $\varepsilon$, eventually,
$$
(2-\varepsilon)t \leq \hei_t \leq d_1t
$$
\center{and}
$$
\frac{\sqrt{t}}{d_1} \leq \dev_t \leq d_1t.
$$
\end{fact}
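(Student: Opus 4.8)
The plan is to deduce the four bounds from the discrete estimates of Proposition~\ref{kesprop} together with two genuinely continuous-time inputs: the domination of the cluster by directed first-passage percolation, and a direct control of the rate at which the top of the cluster grows. Throughout I work with the coupled version in which $C_t = C_{N(t)}$, where $N(t) := \max\{n : T(n)\leq t\}$ is the number of particles added by time $t$; since each waiting time $T(n+1)-T(n)$ is a.s.\ finite and $\sup_n T(n)=\infty$, one has $N(t)\to\infty$ a.s., so the discrete a.s.e.\ bounds of Proposition~\ref{kesprop} may be applied at the random index $N(t)$ for $t$ large.

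First the upper bounds. Every site added to the cluster is the upper vertex of a ringing growth-edge whose lower vertex already belongs to the cluster; tracing such a site back to the origin produces an upward directed path along which the clocks ring at increasing times. Hence $C_t$ is contained in the directed first-passage percolation cluster issued from $\{(0,0)\}$ at time $t$. That cluster grows at most linearly --- this is essentially the exponential decay established in the proof of Fact~\ref{fact:expo}, upgraded to an a.s.\ statement by Borel--Cantelli along integer times (equivalently, by subadditivity) --- so $\hei_t\leq d_1 t$ a.s.e. Since $C_t\subseteq\Z_+^2$, every $P=(a,b)\in C_t$ satisfies $|\textbf{d}(P)|=|b-a|\leq a+b=\h(P)$, whence $\dev_t\leq\hei_t\leq d_1 t$ and the width upper bound follows for free.

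The key step is the height lower bound. Let $m_t\geq 1$ be the number of sites of $C_t$ lying on the top line $\{\h=\hei_t\}$. There are exactly $2m_t$ edges joining this top line to height $\hei_t+1$, and each is a growth-edge ringing at rate $1$. Crucially, an upward directed walk started at height $\hei_t+1$ only visits heights $>\hei_t$ and therefore never meets $C_t\subseteq\{\h\leq\hei_t\}$: its escape probability is $1$. Consequently every ring of one of these $2m_t$ edges immediately adds a site at height $\hei_t+1$, so the maximal height increases at instantaneous rate $2m_t\geq 2$, whatever the configuration. The process $t\mapsto\hei_t$ thus stochastically dominates a Poisson process of rate $2$; by the law of large numbers for the latter, $\hei_t\geq(2-\varepsilon)t$ a.s.e.\ for each fixed $\varepsilon>0$, and a countable intersection over $\varepsilon\in\{1/k\}$ yields the stated ``for every $\varepsilon$'' form.

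Finally the width lower bound, which is where Proposition~\ref{kesprop} re-enters. Applying the height lower bound with $\varepsilon=1$ together with the discrete upper bound $\h(C_{N(t)})\leq c_1 N(t)^{2/3}$ gives $t\leq c_1 N(t)^{2/3}$, i.e.\ $N(t)\geq (t/c_1)^{3/2}$ a.s.e.; so linear growth of the height forces the particle count to grow at least like $t^{3/2}$. Feeding this into the discrete lower bound $\dev_t=|\textbf{d}|(C_{N(t)})\geq c_1^{-1}N(t)^{1/3}$ produces $\dev_t\geq c_1^{-1}(t/c_1)^{1/2}=c_1^{-3/2}\sqrt t$ a.s.e., which is the desired estimate after enlarging $d_1$. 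I expect the main obstacle to be making the stochastic-domination argument for the max-height process fully rigorous: one must check that the rate-$2$ lower bound on the up-jumps holds uniformly in the evolving configuration and set up the coupling with the Poisson process accordingly, the escape-probability-$1$ observation being exactly what makes this rate explicitly computable.
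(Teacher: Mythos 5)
Your proposal is correct and follows essentially the same route as the paper: upper bounds via stochastic domination by directed first-passage percolation, the height lower bound via the observation that the (at least two) edges sitting atop a highest site each have activity $1$ so that the top-height process dominates a rate-$2$ Poisson process, and the width lower bound by combining linear height growth with the discrete bounds $\hei_n\leq c_1 n^{2/3}$ and $\dev_n\geq c_1^{-1}n^{1/3}$ of Proposition~\ref{kesprop} to force $N(t)\gtrsim t^{3/2}$ and hence $\dev_t\gtrsim\sqrt t$. The only cosmetic difference is that you deduce the width upper bound from the height upper bound via $|\textbf{d}|\leq\h$ on $\Z_+^2$, while the paper gets both directly from the FPP domination.
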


\begin{proof}
The quantities $\hei_t$ and $\dev_t$ grow at most linearly because continuous-time DDLA is stochastically dominated by First-Passage Percolation. 

If the lower extremity of an edge is a highest point of the cluster, then the activity of this edge is 1. Consequently, if $T_k$ is the first time when the cluster is of height $k$, then $(T_{k+1}-T_k)_{k\in \N}$ is stochastically dominated by independent exponential random variables of parameter 2 (there exist at least 2 edges of lower extremity being a highest point of the cluster). This entails the at least linear growth of the height.

It results from this, the fact that discrete- and continuous-time DDLA define the same process and Proposition~\ref{kesprop} that the number $N(t)$ of particles in the cluster at time $t$ satisfies, for some deterministic constant $a$,
$$
N(t) \geq at^{3/2} ,
$$
almost surely eventually\footnote{because $N(t)$ goes to infinity when $t$ tends to infinity.}. This implies that, a.s.e.\ $\dev_t \geq \frac{N(t)^{1/3}}{c_1} \geq \frac{a^{1/3}}{c_1}\sqrt{t}$.

\end{proof}

\section{The infinite cluster}

\label{sec:last}
\begin{notation}
In this section, we set 
$$S_n := \{P \in \Z^2~:~\|P\|_1 = n\}\text{   and   }B_n := \{P \in \Z^2~:~\|P\|_1 \leq n\}.$$
We call \defini{elementary loop}
$$
L := \{P\in\Z^2~:~\|P\|_{\infty}=1\}.
$$
\end{notation}

We start this section with a formal definition of (undirected) DLA. 

\vspace{0.3cm}

Recall that if $F\Subset \Z^2$, the \defini{harmonic measure} of $F$ is the unique probability measure $\mu_F$ such that the following holds:

\vspace{0.2 cm}

\begin{center}
\begin{small}
\parbox{12cm}{Take any sequence $(\nu_n)$ of probability measures on $\Z^2$ satisfying
$$
\forall G \Subset \Z^2, \exists n_G, \forall n\geq n_G, \nu_n(G)=0.
$$
Take $W_n$ the symmetric (non-directed) nearest-neighbor random walk in $\Z^2$, starting at $0$. Choose independently a starting point $P$ according to $\nu_n$. If $G$ is a non-empty subset of $Z^2$, let
$$
\tau(G) = \min\{k~:~W_k\in G\},
$$
which is finite almost surely.
Then,
$$
\forall Q \in F, \P_n\left[P+W_{\tau(-P+F)}=Q\right] \xrightarrow[{n\to \infty}]{} \mu_F(\{Q\}).
$$}
\end{small}
\end{center}

In words, $\mu_F$ measures the probability that a site in $F$ is the first site of $F$ to be touched by a walk launched from very far. For more information on the harmonic measure, see \cite{spi}.

There are several equivalent\footnote{The equivalences between the following definition and the natural definitions you may think of boil down to the definition of harmonic measure and strong Markov Property for random walks.} definitions of DLA. The setting that will be convenient in this section is the following.
The first cluster is $C_0 := \{(0,0)\}\subset B_0$. Assume that the  first $n$ clusters have been built and are subsets of $B_n$. Independently of all the choices made so far, choose a point $P$ in $S_{n+2}$ according to $\mu_{S_{n+2}}$. Throw a symmetric random walk $(P+W_k)_{k\in\N}$ starting at $P$ and set
$$
C_{n+1} := \{P+W_{\tau(-P+C_n)-1}\}\cup C_n \subset B_{n+1}.
$$
This process is called \defini{Diffusion-Limited Aggregation}.\footnote{The process consisting in adding a site with probability proportional to its harmonic measure relative to $\{P\notin C_n~:~\exists Q\in C_n,~\|P-Q\|_1=1\}$ is very similar to this process, but not equal to it in distribution.}

\vspace{0.3 cm}

The following fact about $C_\infty:=\bigcup_n C_n$ is well-known.

\begin{fact}
\label{dens}
There is some $\varepsilon > 0$ such that
for all $P \in \Z^2$,
$
\P[P \not \in C_{\infty} ] \geq \varepsilon$.
\end{fact}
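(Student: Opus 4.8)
The plan is to exhibit, for every site $P$, a local event of probability bounded below \emph{uniformly in} $P$ that forces $P$ to remain a permanent hole. The natural candidate is the event that the four nearest neighbours of $P$ (the sites at $\|\cdot-P\|_1=1$) enter the cluster while $P$ itself is still empty. \textbf{Reduction step.} Suppose that at some finite stage the four neighbours of $P$ all belong to the cluster whereas $P$ does not. Then $P$ can never be added afterwards: the aggregating walk is a symmetric nearest-neighbour walk started far from $P$ (on $S_{n+2}$), so the first time it could occupy $P$ it would have to arrive from one of the four neighbours; all of these already lie in the cluster, hence the walk's first hitting time of the cluster occurs strictly before it reaches $P$, and the penultimate site that is actually added is never $P$. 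Consequently $\{P\notin C_\infty\}$ contains the event that $P$ gets encircled while empty — in particular the event that the elementary loop $P+L$ is completed before $P$ — and it suffices to bound the probability of such an encirclement below by a constant independent of $P$.

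\textbf{Localisation and the Markov property.} Let $\tau^\ast$ be the first time the cluster meets a neighbour of $P$. Since $P$ is adjacent only to its four neighbours, $P$ cannot be the first of these five sites to be occupied, so $P\notin C_{\tau^\ast}$; and on the event that no neighbour of $P$ is ever occupied, $P$ is trivially never added, which only helps. Conditionally on $\mathcal F_{\tau^\ast}$, I would show that with probability at least some $\varepsilon>0$ the cluster completes a circuit around $P$ before $P$ is filled. Because encircling $P$ needs only finitely many (at most eight) well-chosen additions, and because every empty site adjacent to the current cluster is the next added site with \emph{positive} probability (positivity of the harmonic measure), the simple Markov property of the discrete dynamics reduces the whole statement to a uniform lower bound on the conditional probability of each individual capping step; taking $\varepsilon$ to be a fixed power of that per-step constant then yields $\P[P\notin C_\infty]\ge\varepsilon$ for all $P$.

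\textbf{Main obstacle: uniformity of the one-step bound.} The delicate point is that these conditional one-step probabilities must be bounded below by a constant depending neither on $P$ nor on the (arbitrarily complicated) shape of the cluster far from $P$. Two features make this plausible and indicate the route. First, any walk that would add $P$ must pass through an empty neighbour of $P$ immediately beforehand, so $P$ is never ``more exposed'' than its surrounding sites; this both rules out $P$ being filled first and keeps the relevant additions confined to the vicinity of $P$. Second, to convert this into a genuinely uniform constant one must control the far field: by a Harnack/maximum-principle argument the law of the incoming walk on a circle of fixed radius around $P$ is, up to bounded multiplicative factors, insensitive to the distant geometry, so the conditional law of the next added site, restricted to a neighbourhood of $P$, is comparable to the harmonic measure of the bounded local configuration. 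On that finite configuration a short case analysis (over the possible patterns of already-occupied neighbours and corners) shows the cluster can grow so as to seal the mouth of the pocket containing $P$ with probability bounded below. I expect this far-field decoupling — producing a single $\varepsilon$ valid simultaneously for all $P$ and all cluster shapes — to be the real content of the argument; the reduction and the Markov-property bookkeeping are routine once it is in hand.
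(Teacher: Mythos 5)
Your reduction (once the four $\|\cdot\|_1$-neighbours of $P$ are occupied while $P$ is still empty, $P$ is blocked forever, since any walk reaching $P$ would first have to step onto an occupied neighbour and would stick there) is correct and is also the starting point of the paper's argument. But the main step is left open, and the route you sketch for it does not work as stated. You reduce everything to ``a uniform lower bound on the conditional probability of each individual capping step'', i.e.\ a lower bound, uniform over all cluster shapes, on the probability that the \emph{next added site} is a prescribed empty site adjacent to the cluster near $P$. No such bound exists: the next particle attaches according to the harmonic measure of the entire growing cluster, and the mass given to any fixed site near $P$ can be arbitrarily small --- both because $P$ may sit at the bottom of a deep fjord, and, more simply, because the cluster is large and almost all of its harmonic measure may live far from $P$. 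The Harnack/decoupling argument you invoke controls \emph{ratios} of hitting probabilities of nearby sites, so it can tell you where a particle lands \emph{conditionally on its landing near $P$}; it cannot produce a lower bound on the probability that the next addition occurs near $P$ at all. Hence the per-step constant whose fixed power you want to take is not bounded away from $0$, and the iteration collapses.

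The paper sidesteps this entirely by never trying to control which site is added next. It runs the dynamics particle by particle \emph{and step by step}, and waits for a ``critical step'': a (possibly never occurring) stopping time at which the currently moving particle is at distance $1$ from $P$ while $P$ is at distance $1$ from the cluster. If $P$ is ever to be added, such a step must occur first. At a critical step one conditions on the \emph{next seven steps of that particular walk} being exactly those that tour the elementary loop $P+L$ (the eight sites at $\|\cdot\|_\infty$-distance $1$ from $P$) clockwise. This event concerns only future increments of the walk, which are i.i.d.\ uniform over the four directions and independent of the cluster, so it has probability exactly $4^{-7}$ \emph{whatever the cluster looks like} --- this is where the uniformity comes from, with no far-field analysis needed. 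Each forced tour sticks one new particle onto $P+L$ without ever visiting $P$, so after at most eight critical steps the loop is full and $P$ is sealed off, giving $\varepsilon=4^{-56}$. Note also that you will in general need to occupy corner sites of $P+L$ and not only the four $\|\cdot\|_1$-neighbours, since a neighbour of $P$ can itself only be added once it is adjacent to the cluster; the clockwise tour of the full loop handles this bookkeeping automatically.
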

\begin{proof}
Let $P\in \Z^2$. We  consider our evolution temporally: we launch the first particle, look at it step after step until it sticks, before launching the second particle\dots A step is said to be \defini{critical} if the current particle is at distance 1 from $P$ \emph{and} $P$ is at distance 1 from the current cluster.

We wait for a critical step (we may wait forever). Conditionally on the fact that such a step exists, with probability $4^{-7}$, the particle tries --- immediately after the first critical step --- to visit all the points of $P+L$, say clockwise.\footnote{By this, we mean that the following $7$ steps that the particle would take if it was not hindered by the cluster are the ones making it visit $P+L$ clockwise.} Since the step is critical and $L$ has cardinality 8, the particle must stick to some particle of the cluster and the cardinality of $(P+L)\cap C_{\text{current time}}$ is increased by $1$. Doing so at the first 8 critical steps that occur\footnote{which means at every critical step if there are less than 8 of them\\ By ``the first 8 critical steps'', we mean the first critical step (which occurs for the $k_1^{\text{th}}$ particle), the first critical step of a particle different from the $k_1^{\text{th}}$ one, and so on up to 8.} prevents $P$ from being added to the cluster. The fact thus holds for $\varepsilon := 4^{-7\times 8}.$
\end{proof}

Such a proof cannot work for the directed version of DLA. Indeed, take a site $P$ with a neighbor belonging to the cluster. Even assuming that there are enough particles coming in the neighborhood of $P$, one cannot always surround $P$ by modifying a finite number of steps: for example, $(2,0)$ will never be added to the cluster before $(1,0)$ if one considers a DDLA launched from $(0,0)$. The screening effect of the particles above it can be very strong, but will never reduce its activity to $0$.

\vspace{0.3 cm}

However, we can prove the following proposition.

\begin{proposition}
\label{propnever}
Consider a DDLA starting from $\{(0,0)\}$.
With positive probability, the site $(1,0)$ is never added to the cluster.
\end{proposition}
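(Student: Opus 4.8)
The plan is to work in continuous time (the two time--parametrisations having the same law) and to exploit that $(1,0)$ can enter the cluster in essentially only one way. Since the dynamics lives in $\Z_+^2$, the only directed edge with upper vertex $(1,0)$ and lower vertex a possible cluster site is $e^\star:=((1,0),(0,0))$, whose lower vertex is occupied at all times; moreover $(a,0)$ with $a\ge 1$ can never be added, since it can only be reached through $(1,0)$. Thus $(1,0)$ is added exactly at the first ring of the Poisson clock of $e^\star$ at which the upward walk launched from $(1,0)$ avoids the current cluster. First I would introduce the auxiliary growth process $\hat C(t)$, namely the DDLA from $\{(0,0)\}$ run with the same clocks and walks but in which the addition of $(1,0)$ (and hence of the forbidden sites $(a,0)$, $a\ge 1$) is simply suppressed, i.e.\ the edge $e^\star$ is removed. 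Because $\hat C$ never uses the clock or the walks attached to $e^\star$, it is independent of them, and up to the first time $(1,0)$ would be added the real cluster coincides with $\hat C$.

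Consequently $(1,0)$ is never added if and only if, at every ring $\sigma_i$ of $e^\star$, the walk launched from $(1,0)$ hits $\hat C(\sigma_i^-)$. Since the number of relevant growth--edges equals $1$, the probability that a given ring produces an addition, conditionally on $\hat C$, is $\act_{\hat C(\sigma_i^-)}((1,0))$. The rings of $e^\star$ form a rate--$1$ Poisson process independent of $\hat C$, so by the thinning theorem the additions form, conditionally on $\hat C$, a Poisson process of intensity $\act_{\hat C(t)}((1,0))\,dt$. Hence
\[
\P[(1,0)\text{ is never added}\mid \hat C]=\exp(-A),\qquad A:=\int_0^{\infty}\act_{\hat C(t)}((1,0))\,dt ,
\]
and therefore $\P[(1,0)\text{ is never added}]=\E[\exp(-A)]$. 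This last quantity is positive as soon as $\P[A<\infty]>0$, so it suffices to exhibit a positive--probability event on which the integral converges.

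To control $A$ I would bound the activity by building a barrier along the first row. Because $(a,0)$ ($a\ge 1$) is forbidden, the only lane along which an upward walk from $(1,0)$ can escape runs along the $x$--axis: to avoid the sites $(1,1),(2,1),\dots,(B,1)$, such a walk is forced to take its first $B$ steps straight to the right, an event of probability $2^{-B}$. Writing $B(t)$ for the length of the contiguous segment $(1,1),\dots,(B(t),1)$ present in $\hat C(t)$, this gives the deterministic bound $\act_{\hat C(t)}((1,0))\le 2^{-B(t)}$, which only improves as the rest of $\hat C$ grows. It therefore suffices to show that, with positive probability, $B(t)$ grows fast enough that $\int_0^\infty 2^{-B(t)}\,dt<\infty$; for instance $B(t)\ge 2\log_2 t$ eventually is more than enough.

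The main obstacle is precisely this last point: showing that the barrier advances. The segment fills from left to right, $(B+1,1)$ being attached to the current tip $(B,1)$, and in continuous time this occurs at rate equal to the probability that an upward walk from $(B+1,1)$ avoids $\hat C(t)$. The favourable feature is that this tip is the bottom--right--most occupied site, at horizontal deviation $-B$, far to the right of the bulk; an escaping walk from it must drift a distance of order $B$ leftwards before it can meet the bulk, whereas it has only of order $\h(\hat C(t))$ steps before it rises above the cluster. I expect the hard part to be turning this into a genuine lower bound on the tip's growth rate, that is, ruling out that the (non--monotone, geometrically uncontrolled) bulk shades the advancing tip, and thereby producing a positive--probability event on which $B(t)\gtrsim\sqrt{t}$. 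Once such a bound is secured, the barrier estimate of the previous paragraph forces $A<\infty$ on that event, and the identity $\P[(1,0)\text{ is never added}]=\E[\exp(-A)]$ concludes.
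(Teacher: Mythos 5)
Your reduction is correct as far as it goes: $(1,0)$ can only be created through the single edge $e^\star=((1,0),(0,0))$, the suppressed process $\hat C$ agrees with the true cluster until that creation, and the thinning argument gives $\P[(1,0)\text{ never added}]=\E[e^{-A}]$ with $A=\int_0^\infty \act_{\hat C(t)}((1,0))\,dt$. Your barrier bound $\act_{\hat C(t)}((1,0))\le 2^{-B(t)}$ — the walk is forced to take its first $B(t)$ steps along the bottom row to clear the contiguous segment $(1,1),\dots,(B(t),1)$ — is also exactly the geometric mechanism in the paper. But the proof is not complete, and the missing step is not a routine verification: everything now hinges on showing that, with positive probability, $B(t)$ eventually exceeds $c\log t$, which you explicitly leave open. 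Nothing you have established rules out that the bulk of the cluster, which may overhang row $1$ arbitrarily far to the right in rows $\ge 2$, shades the advancing tip $(B(t)+1,1)$ so severely that its activity decays exponentially in $t$; in that scenario $B(t)$ need not even tend to infinity, let alone logarithmically, and $A$ would be infinite. Any attempt to lower-bound the tip's activity faces the same shading problem one level up, so the lemma you need is of essentially the same nature as the proposition itself.

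The paper's proof avoids this entirely by never asking how fast the tip moves: it only compares rates. Writing $X_t$ for the current length of the row-$1$ segment, the coupling (forced right-steps along row $0$, then merging with a walk started at $(X_t+1,1)$ upon first entering row $1$) yields $\act_{C_t}((1,0))\le 2^{1-X_t}\,\act_{C_t}((X_t+1,1))$ at every time, whatever the cluster above looks like. Hence, conditionally on $(n,1)$ being added before $(1,0)$, the probability that $(1,0)$ beats $(n+1,1)$ in the ensuing race is at most of order $2^{-n}$ (in a two-competitor race whose rate ratio is bounded by $\lambda$ at all times, the slower competitor wins with probability at most $\lambda$), and the convergence of $\prod_{n\ge 1}(1-O(2^{-n}))$ to a positive limit concludes. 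This argument is immune to arbitrary slowdowns: if the tip is shaded, then $(1,0)$ is shaded by at least the same factor. To complete your $\E[e^{-A}]$ formulation you would in effect have to import this same comparison to control $B(t)$, at which point the race argument is already the shorter path.
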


\begin{proof}With positive probability, the first vertex to be added is $(0,1)$. Denote by $X_t$ the maximal first coordinate of an element of $\N\times\{1\}$ that belongs to the cluster at time $t$. At time $t$, the activity of $(1,0)$ is at most $2^{1-X_t}$ times the activity of $(X_t+1,1)$. \begin{small}(To see this inequality, map a directed random walk $W$ launched at $(1,0)$ that takes its first $X_t$ steps to the left to the random walk launched at $(X_t+1,1)$ that merges with $W$ as soon as $W$ enters $\N\times\{1\}$.)\end{small} Thus, conditionally on the fact that $(n,1)$ is added to the cluster before $(1,0)$, the probability that  $(1,0)$ is added to the cluster before $(n+1,1)$ is at most $2^{-n}$. Since $\prod_{n\geq 1}(1-2^{-n})$ is positive, Proposition~\ref{propnever} is established.
\end{proof}

\begin{coro}
\label{coro:truc}
Consider a DDLA starting from $\{(0,0)\}$.
Almost surely, for every $n\in\N$, only finitely many points of $\N\times\{n\}$ are added to the cluster.
\end{coro}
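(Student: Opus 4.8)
The plan is to upgrade the one-site estimate behind Proposition~\ref{propnever} into a comparison between the right\-most occupied sites (the \emph{frontiers}) of two consecutive rows, and then to play the two resulting monotonicity pressures against each other.

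First I would record the natural generalisation of the activity bound used in the proof of Proposition~\ref{propnever}: for every $n\ge 0$ and $a\ge 1$, if the frontier of the row $\N\times\{n+1\}$ currently sits at abscissa $Y\ge a$, then
\[
\act\big((a,n)\big)\ \le\ 2^{\,a-Y}\,\act\big((Y+1,n+1)\big).
\]
The proof is exactly the coupling of Proposition~\ref{propnever}: a surviving upward walk from $(a,n)$ must take at least $Y-a+1$ prescribed right\-steps to get past the occupied part $\{(a,n+1),\dots,(Y,n+1)\}$ of the row above before it may turn left, and once it has done so it can be coupled with a walk issued from $(Y+1,n+1)$. Crucially this is only an \emph{upper} bound on $\act((a,n))$, so it stays valid no matter what the rest of the cluster above looks like.

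Next I would run this as a race in the continuous\-time description, where every vacant site is added at rate equal to its current activity. Writing $X^{(n)}_t$ for the abscissa of the frontier of row $n$ and $g=X^{(n+1)}_t-X^{(n)}_t$ for the gap, the estimate says that at an epoch where one of the two frontiers advances, the probability that it is the lower one is at most $2^{1-g}$. Thus the gap performs a process with a strong upward drift, and a Borel--Cantelli argument (using the strong Markov property at the successive advance times together with $\sum_g 2^{1-g}<\infty$) shows that \emph{as soon as the upper row advances infinitely often, the lower row advances only finitely often}. Equivalently: an infinite row freezes the row immediately below it, so no two consecutive rows can both be infinite.

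The closing step is the opposite pressure. An infinite row $n$ keeps adding particles arbitrarily far to the right; each such particle $(a,n)$ switches on the edge below $(a,n+1)$, and, since the frontier of row $n$ repeatedly emerges to the right of the (by the race, necessarily finite) bulk sitting immediately above it, the site $(a,n+1)$ is infinitely often unshaded long enough to be added. This would force row $n+1$ to be infinite as well, contradicting the race. Hence no row is infinite, and as there are countably many rows a union bound yields the statement almost surely. The main obstacle is precisely this last step: the activity of a site of row $n+1$ is screened not only by row $n+2$ but by the \emph{entire} cluster lying above it, so proving that such sites are genuinely added infinitely often — equivalently, excluding an a priori configuration with infinitely many alternating infinite and finite rows — requires a quantitative control of how the growing bulk shades a fixed row (so that the time\-integrated activity of a frozen frontier site is finite). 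This is a random\-walk\-avoidance estimate in the spirit of Kesten's argument of Section~\ref{kestin}, rather than the soft coupling used for the race.
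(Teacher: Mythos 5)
Your proposal has a genuine gap, and you have located it yourself: the ``opposite pressure'' step (an infinite row forces the row above it to be infinite) is never proved, and without it the race only rules out two \emph{consecutive} infinite rows, which does not exclude row $17$ being infinite while rows $16$ and $18$ are finite. As written the argument therefore does not prove the corollary. (The paper offers no explicit proof --- the statement is presented as a consequence of Proposition~\ref{propnever} --- but the intended route is an almost-sure upgrade of that proposition's coupling, not the two-lemma dichotomy you set up.) Two smaller points: your bound $\act((a,n))\le 2^{\,a-Y}\act((Y+1,n+1))$ silently assumes that $\{(a,n+1),\dots,(Y,n+1)\}$ is entirely occupied --- true when $a$ exceeds the frontier of row $n$, since to the right of that frontier row $n+1$ can only grow by consecutive right-steps, but it should be said --- and the conclusion of the race requires the birth--death analysis of the gap (including possible leapfrogs) to be written out.

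The missing ingredient is, however, much softer than the Kesten-style, time-integrated shading estimate you anticipate, and once you have it the dichotomy becomes unnecessary. Work at the stopping time $\tau_a$ at which the frontier of row $n$ reaches $(a,n)$, and compare \emph{at that same instant} the activity of $(a+1,n)$ with that of the first vacant site $(b,n+1)$ of row $n+1$ with $b\ge a$ (this site is adjacent to the cluster, via $(a,n)$ or via $(b-1,n+1)$). Any escaping walk from $(a+1,n)$ must first run through the vacant part of row $n$ and enter row $n+1$ at some abscissa $>\max(a,Y)$; lifting that initial segment one row up produces a strictly shorter escaping path from $(b,n+1)$ that avoids the \emph{same} cluster. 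Hence $\act((b,n+1))\ge c\cdot\act((a+1,n))$ for a universal $c>0$, whatever the bulk above looks like: both sites are screened by the same bulk, so no absolute control of the shading is needed. Running this two-site race a bounded number of times gives a uniform $c_1>0$ such that, conditionally on $\mathcal{F}_{\tau_a}$, the frontier of row $n+1$ reaches $a+2$ before that of row $n$ moves, with probability at least $c_1$; your race estimate then freezes row $n$ forever with a further uniform probability at least $\delta_0=\prod_{k\ge 1}(1-2^{-k})>0$. Iterated conditioning at the times $\tau_a$ yields
$$
\P\left[\text{the frontier of row } n \text{ advances at least } k \text{ times}\right]\le (1-c_1\delta_0)^k,
$$
so each row is almost surely finite, and a union over $n$ concludes. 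This also repairs your Lemma B for free (infinitely many successes of the seeding event force row $n+1$ to be infinite by conditional Borel--Cantelli), but the point is that you no longer need it.
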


\begin{small}
\begin{notation}
Recall that for $b\in \R^\star_+$, we set $\mathcal{C}_b := r_{-\pi/4}\left(\{(x,y)\in \R^2 : |y|\geq b|x| \}\right)$.
\end{notation}
\end{small}

\begin{fact}
\label{fact:truc}
Consider a DDLA starting from $C\not=\varnothing$. Let
$$
C_{\infty} := \underset{t\geq 0}\bigcup C_t.
$$
Then, almost surely, for all $P \in \Z^2$, for all $b >0$, $
C_{\infty}\cap\left(P+\mathcal{C}_b\right)$ is infinite.
\end{fact}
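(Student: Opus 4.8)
The plan is to fix $P\in\Z^2$ and a rational $\beta\in\Q$ with $\beta>0$, prove that almost surely $C_\infty\cap(P+\mathcal C_\beta)$ is infinite, and then take a countable intersection over the pairs $(P,\beta)$; since $\mathcal C_b\supseteq\mathcal C_{b'}$ whenever $b\le b'$, handling all rational $\beta$ handles all $b>0$. Now $\mathcal C_\beta$ is a double cone around the vertical direction $(1,1)$, and the cluster only grows upward while $\h(C_t)\to\infty$ (the highest sites always have positive activity, as in Fact~\ref{kestt}); the points of $C_\infty$ in the \emph{lower} sheet of $P+\mathcal C_\beta$ can therefore only help, so it suffices to find infinitely many points $Q\in C_\infty$ in the \emph{upper} sheet, i.e.\ with $\h(Q)>\h(P)$ and $|\ddd(Q)-\ddd(P)|\le\tfrac1\beta(\h(Q)-\h(P))$. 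Because $C_\infty$ is connected (each added site is adjacent to the current cluster) and its heights are unbounded, it meets every height-level above $\h(P)$; writing $\mu_n$ for the infimum of $|\ddd(Q)-\ddd(P)|$ over the points $Q\in C_\infty$ of height $n$, the goal reduces to showing that a.s.\ $\mu_n\le\tfrac1\beta(n-\h(P))$ for infinitely many $n$.

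Wide cones follow immediately from Section~\ref{kestin}. The bounds of Proposition~\ref{kesprop} (or their analogue for the starting cluster at hand) provide a constant $c_1$ with $|\ddd|(C_t)\le c_1\sqrt N$ and $\h(C_t)\ge\sqrt{2N}$ when $C_t$ has $N$ particles, whence $|\ddd|(C_t)\le(c_1/\sqrt2)\,\h(C_t)$. Up to the fixed additive constants $\ddd(P),\h(P)$, any highest point of $C_t$ then satisfies the cone inequality as soon as $\beta<\beta_0:=\sqrt2/c_1$; since $\h(C_t)\to\infty$ this yields cone points of arbitrarily large height, so $C_\infty\cap(P+\mathcal C_\beta)$ is infinite for every $\beta<\beta_0$.

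The genuine difficulty is the narrow-cone regime $\beta\ge\beta_0$. Here the proven estimates still permit $|\ddd|(C_t)$ to be of the same order as $\h(C_t)$ (a ``short and wide'' cluster), so a highest point need not lie in the cone; worse, they are compatible with a persistent horizontal drift carrying the cluster out of a fixed narrow cone above some height. The heart of the proof must therefore be a \emph{no-drift} statement, namely $\liminf_n \mu_n/n=0$. I would attack this in two steps. First, I would use the left--right symmetry of DDLA---invariance in law under $(a,b)\mapsto(b,a)$, i.e.\ $\ddd\mapsto-\ddd$---to forbid a deterministic drift of the top and to set up a reflection argument. Second, I would run a conditional Borel--Cantelli (Lévy) argument along the times at which the height increases: at each such moment a highest point is a clean tip, and each of the two sites directly above it has activity bounded below by a universal $c_0>0$ (an upward walk from a tip escapes the cluster with probability bounded below, and the site has a lower neighbour in the cluster), so conditionally on the past the tip deviation takes $\pm1$ steps with both directions of uniformly positive probability. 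Showing that this tip process returns to within $o(\text{height})$ of the axis infinitely often is exactly where the real work lies.

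I expect this last point to be the main obstacle. It is precisely the mechanism behind the (so far only heuristic) fjord-closing discussion of the introduction, which rests on the recurrence of the two-dimensional random walk, and the crux is to make the \emph{return} of the tip rigorous rather than merely bounding its drift by the constant $1/\beta_0$ extracted from Kesten's argument. Once $\liminf_n\mu_n/n=0$ is established, the narrow-cone case is immediate, and together with the wide-cone case it gives the statement for all $\beta$, hence, after the countable intersection, for all $P\in\Z^2$ and all $b>0$.
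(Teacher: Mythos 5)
Your proposal does not close the statement: you yourself flag that the narrow-cone case rests on an unproven ``no-drift'' claim ($\liminf_n \mu_n/n=0$), and the tools you offer for it do not suffice. Left--right symmetry of the law does not forbid the top of the cluster from drifting: it only forces the two directions of drift to be equally likely, and a spontaneous, sample-dependent drift of the tip out of every narrow cone is exactly the scenario you would have to exclude. The conditional Borel--Cantelli step only gives that the tip deviation moves by $\pm 1$ with conditionally positive probability in each direction, which yields no recurrence-type conclusion without quantitative control that is not in the paper (and, as you note, is the content of the still-heuristic fjord discussion). There is also a secondary issue: the wide-cone half leans on Proposition~\ref{kesprop}, which is proved for the cluster grown from $(0,0)$; the Fact is stated for an arbitrary non-empty initial cluster $C$ (including the infinite ones of Section~\ref{infvol}), where $|\ddd|(C_t)$ can be infinite from the start, so even that half does not cover the stated generality.

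The paper's proof avoids all global geometry. It first reduces ``infinite'' to ``non-empty'' (apply the non-emptiness result to a sequence of sub-cones of $P+\mathcal{C}_b$ going to infinity). Then it fixes an upward directed path $\mathbf{P}=(P_1,\dots,P_n)$ from a point of $C$ to a point $P_n$ lying well inside $P+\mathcal{C}_b$. As long as $C_t$ does not meet the cone, an upward walk launched from $P_n$ stays in the cone forever with probability at least some $c>0$ (law of large numbers for the directed walk), hence avoids $C_t$; consequently, if $k(t)$ is the largest index with $P_{k(t)}\in C_t$, the site $P_{k(t)+1}$ is added at rate at least $2^{-(n-k(t))}c>0$ (force the walk launched above $P_{k(t)}$ to follow $\mathbf{P}$ up to $P_n$, then let it escape). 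Each of the finitely many increments of $k$ therefore takes an a.s.\ finite time, so either the cone is hit earlier or $P_n$ itself is added; in both cases $C_\infty\cap(P+\mathcal{C}_b)\neq\varnothing$. This is a purely local, positive-rate argument along one path, valid for every $b>0$ at once, and it is the idea your proposal is missing.
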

\begin{proof}
Notice that it is enough to prove the fact with ``non-empty'' instead of ``infinite''.

There is an increasing path $\textbf{P}=(P_1,\dots,P_n)$ going from a point of $C$ to a point in $P+\mathcal{C}_b$.
The conic structure and the law of large numbers guarantee that the activity of $P_n$ is bounded away from $0$ (say larger than $c > 0$) as long as $P+\mathcal{C}_b = \varnothing$ (which we now assume).

Thus, if $k(t) := \max\{i~:~P_i \in C_t\}$ and if $k(t) < n$, then $P_{k(t)+1}$ will be added at rate at least $2^{n-k(t)}c > 0$. Indeed, a walk can reach $P_n$ from $P_{k+1}$ by using $\textbf{P}$; then, from $P_n$, it escapes with probability $c$. Consequently, $k(t)$ will almost surely take a finite time to increase its value, as long $k(t) < n$. Thus $k(\infty) = n$, and Fact~\ref{fact:truc} is established.
\end{proof}

\vspace{0.5 cm}

Let us conclude with a couple of questions.

\begin{question}
For which values of $b$ does it hold that the infinite DDLA cluster is almost surely a subset of $\mathcal{C}_b$ up to finitely many points?
\end{question}

\begin{question}
What is the distribution of the number of ends of the infinite DDLA cluster?
\end{question}

\small

\vfill

Work realised at the Université de Genève and the ENS de Lyon. The author is currently a postdoc at the Weizmann Institute of Science. E-mail address: \texttt{sebastien.martineau@weizmann.ac.il}.
\end{document}